\newcommand{\R}{\mathbb{R}}
\newcommand{\set}[2]{\left\{#1\,\left|\;#2\right.\right\}}
\newcommand{\V}[2]{{\left(\begin{array}{c}#1\\#2\end{array}\right)}}
\def\I{{I}}
\def\h{h}
 \newcommand{\normL}[1]{\left\|#1\right\|_{\I}}
 \newcommand{\normRD}[1]{\left|#1\right|_{\I}}
 \newcommand{\norm}[1]{\left\|#1\right\|}
 \newcommand\restr[2]{{
  \left.\kern-\nulldelimiterspace 
  #1 
  \vphantom{\big|} 
  \right|_{#2} 
  }}
\newtheorem{Theorem}{Theorem}[section]
\newtheorem{Lemma}[Theorem]{Lemma}
\newtheorem{Corollary}[Theorem]{Corollary}
\theoremstyle{definition}
\newtheorem{Assumption}[Theorem]{Assumption}
\numberwithin{equation}{section}
\def\mathref#1{\ifmmode\mathrm{(\ref{#1})}\else(\ref{#1})\fi}
\def\rhx2{\sqrt{1+ r_{h,x}^2}}
\def\Uad{{U_\textup{ad}}}
\def\uopt{{\bar u}}
\def\yopt{{\bar y}}
\def\popt{{\bar p}}
\def\uoptk{{\bar u_k}}
\def\yoptk{{\bar y_k}}
\def\V{{H^1_0(\Omega)}}
\def\Vd{{H^{-1}(\Omega)}}
\def\H{{L^2(\Omega)}}
\def\L2{{L^2(I,\H)}}
\title{Crank-Nicolson time stepping and variational discretization of control-constrained parabolic optimal control problems}
\author{
Nikolaus von Daniels\footnote{Schwerpunkt Optimierung und Approximation,
Universit\"at Hamburg, Bundesstra{\ss}e 55, 20146 Hamburg, Germany.},
Michael Hinze\footnotemark[1], and 
Morten Vierling\footnotemark[1]
}
\date{6-MAR-2015}
\begin{document}  \nocite{*}

\maketitle

\begin{center}
 {\bf \LARGE  }
 \end{center}

 {\small {\bf Abstract:}
We consider a control constrained parabolic optimal control problem and use variational discretization for its time semi-discretization. The state equation is treated with a Petrov-Galerkin scheme using a piecewise constant Ansatz for the state and piecewise linear, continuous test functions. This results in variants of the Crank-Nicolson scheme for the state and the adjoint state. Exploiting a superconvergence result we prove second order convergence in time of the error in the controls. Moreover, the piecewise linear and continuous parabolic projection of the discrete state on the dual time grid provides a second order convergent approximation of the optimal state without further numerical effort. Numerical experiments confirm our analytical findings.}\\[2mm]

{\small {\bf AMS Subjects Classification:} 49J20, 35K20, 49M05, 49M25, 49M29, 65M12, 65M60.} \\[2mm]
{\small {\bf Keywords:} Optimal control, Heat equation, Control constraints, Crank-Nicolson time stepping, Error estimates, Variational control discretization, Finite elements.}

\pagenumbering{arabic}
\section{Introduction}\label{S:Intro}
The purpose of this paper is to prove optimal a priori error bounds for the variational time semi-discretization of a generic parabolic optimal control problem, where the state in time is approximated with a Petrov Galerkin scheme. The key idea consists in choosing piecewise linear, continuous test functions and a discontinuous, piecewise constant Ansatz for the approximation of the state equation. With this Petrov Galerkin Ansatz variational discretization of the optimal control problem delivers a cG(1) time approximation of the optimal time semi-discrete adjoint state. The resulting time integration schemes for the state and the adjoint state are variants of the Crank-Nicolson scheme. Combining this setting with the supercloseness result of Corollary \ref{C:Supercloseness} for interval means we are able to prove second order in time convergence of the time discrete optimal control in Theorem \ref{T:main}. Moreover, the piecewise linear and continuous parabolic projection of the discrete state based on the the values of the discrete state on the dual time grid provides a second order convergent approximation of the optimal state without further numerical effort, see Theorem \ref{T:2ndorderstate}.

Our work is motivated by the work \cite{MeidnerVexler2011} of Meidner and Vexler, whoms technical results we use whenever possible.
Under mild assumptions on the active set they show the same convergence order in time for the post-processed piecewise linear, continuous parabolic projection of the piecewise constant in time optimal control.
This control is obtained by a Petrov Galerkin scheme with variational discretization of the parabolic optimal control problem. In comparison to their work, we switch Ansatz and test space in our numerical schemes.

Our work is novel in several aspects:
\begin{itemize}
\item In \eqref{E:WFD} to the best of the authors knowledge a new, fully variational time-discretization scheme for the parabolic state equation is presented. It results in a Crank-Nicolson scheme with initial damping step for the nodal values of the state.
\item In Theorem \ref{T:main} we provide an optimal error estimate for the time-discrete control, namely
\[
 \norm{\uopt-\uoptk}_{L^2(0,T,\mathbb R^D)} \le Ck^2,
\]
with $\uopt$ the optimal control, which is the solution of the optimal control problem \eqref{OCP}, and $\uoptk$ denoting the optimal control obtained from the related discretized problem \eqref{OCPDiscr} (see below). Here, $k$ denotes the grid size of the time grid. This result could be compared to \cite[Theorem 6.2]{MeidnerVexler2011}. There, under mild assumptions on the structure of the active set w.r.t. $\uopt$, a similar bound is obtained for the post-processed parabolic projection of a piecewise constant optimal control. Our approach avoids such an assumption in the numerical analysis, and presents an error estimate for the variational-discrete optimal control.
\item In Theorem \ref{T:2ndorderstate} we prove 
\[
\norm{\yopt-\pi_{P_k^*}\yoptk}_{L^2(0,T,L^2(\Omega))} \le Ck^2,
\]
where $\pi_{P_k^*}\yoptk$ denotes the piecewise linear and continuous parabolic projection of the discrete state based on the the values of the discrete state on the dual time grid defined in Section \ref{S:Tdiscr}. Since these values are already known, the projection is for free.
\item Our approach demonstrates that variational discretization of \cite{Hinze2005} through the choice of Ansatz and test space in a Petrov Galerkin approximation of parabolic optimal control problems offers the possibility to specify the discrete structure of variational optimal controls. Of course this feature applies also to other classes of PDE constrained optimal control problems.
\end{itemize}

In our note we only consider semi-discretization in time for two reasons. On the one hand, error estimation for standard spatial finite element approximations of the time-semidiscrete optimal control problem \eqref{OCPDiscr} is along the lines of \cite[Section 6.2]{MeidnerVexler2011}. On the other hand, we are interested in the approximation of optimal controls, which in a realistic time-dependent scenario only depend on time, see the possible definitions of the control operator $B$ below.\\

With $I:=(0,T) \subset \mathbb{R}$, $T<\infty$, and a fixed function $y_d\in\L2$, we consider the linear-quadratic optimal control problem
\begin{equation}\label{OCP}\tag{$\mathbb P$}
\begin{aligned}
&\min_{ y\in Y,u\in \Uad} J(y,u)=\frac{1}{2}\|y-y_d\|^2_{\L2}+\frac{\alpha}{2}\|u\|^2_U,\\
&\text{s.t. } y=S(Bu,y_0).
\end{aligned}
\end{equation}
The state space $Y$ is given by
\[ 
    Y:= W(I):=\{v\in L^2(I,\V), \partial_t v\in L^2(I,\Vd)\}
           \hookrightarrow C([0,T],L^2(\Omega)),
\]
and the operator $S: L^2(I,\Vd)\times L^2(\Omega) \rightarrow W(I)$, $(f,\kappa)\mapsto y:= S(f,\kappa)$, denotes the weak solution operator associated with the parabolic problem
\begin{equation}\label{E:State}
\begin{aligned} \partial_t y -\Delta y &= f &&\text{in }I\times\Omega\,,\\
y&=0&&\text{in } I\times\partial\Omega\,,\\
y(0)&=\kappa &&\text{in } \Omega,
\end{aligned}
\end{equation}
i.e. for $(f,\kappa) \in L^2(I,\Vd)\times L^2(\Omega)$ the function $y\in W(I)$ satisfies
$y(0)=\kappa$ and
\begin{multline}\label{E:WF}
\int\limits_0^T \langle \partial_t y(t),v(t)\rangle_{\Vd\V} + a(y(t),v(t))\, dt
= \int\limits_0^T \langle f(t),v(t)\rangle_{\Vd\V}\, dt\\
    \quad\forall\, v\in L^2(I,\V).
\end{multline}
Here $\Omega\subset \R^n$, $n=2,3$, is a convex polygonal domain with boundary $\partial \Omega$, and for $y,v\in\V$ we define
\[
a(y,v):= \int\limits_\Omega \nabla y(x) \nabla v(x)\ dx.
\]
In what follows several choices of control spaces are feasible. In particular we may choose as  control space $U=L^2(I,\R^D)$, $D\in\mathbb{N}$, and as the admissible set
\[ \Uad=\set{u\in U}{a_i\le u_i(t)\le b_i\text{ a.e. in $I$, }i=1,\dots, D},\] where $a_i$, $b_i\in\R$, $a_i < b_i$ $(i=1,\dots, D)$. In this case the control operator is given by
\begin{equation}\label{E:B}
B: U\rightarrow L^2(I,\Vd)\,,\quad u\mapsto
 \left( t\mapsto \sum_{i=1}^Du_i(t)g_i \right),
\end{equation}
where $g_i\in\Vd$ are given functionals, whose regularity is specified in Assumption \ref{A:Regularity} below. Clearly, $B$ is linear and bounded. A further possible choice for the control space is $U=\L2$ with 
\[ 
   \Uad=\set{u\in U}{a\le u(t,x)\le b\text{ a.e. in }I\times\Omega}, 
\]
where $a<b$ denote real constants. In this case the control operator $B$ is the injection from $U$ into $L^2(I,H^{-1}(\Omega))$. In both cases the admissible set $\Uad$ is closed and convex. We build our exposition upon the practical more relevant first choice of time-dependent amplitudes as controls.

It is well known that the operator $S$ is well defined, i.e. for every $(f,\kappa) \in L^2(I,\Vd)\times L^2(\Omega)$ a unique state $y \in W(I)$ satisfying \eqref{E:WF} exists. Furthermore, it fulfills
\[
\|y\|_{W(I)} \le C \left\{\|f\|_{L^2(I,\Vd)}+\|\kappa\|_{L^2(\Omega)}\right\}.
\]
Now let $y \in Y$ denote the unique solution of \eqref{E:WF}, and let $v\in W(I)$. Then it follows from integration by parts for functions in $W(I)$, that with the bilinear form $A:W(I)\times W(I)\rightarrow \mathbb{R}$ defined by
\begin{equation}\label{bilinA}
A(y,v):= \int\limits_0^T -\langle \partial_t v(t),y(t)\rangle_{\Vd\V} 
  + a(y(t),v(t))\, dt + (y(T),v(T))_{\H}\,, 
\end{equation}
the state $y$ also satisfies
\begin{equation}\label{E:WFM}
A(y,v) = \int\limits_0^T \langle f(t),v(t)\rangle_{\Vd\V}\, dt + (\kappa,v(0))_{\H}
    \quad\forall\ v\in W(I).
\end{equation}
Furthermore, $y$ is the only function in $Y$ which satisfies \eqref{E:WFM}. In the next section we use the bilinear form $A$ to define our numerical approximation scheme for the state equation.


With $\mathcal O(k^2)$ error-bounds for the control in mind we follow \cite{MeidnerVexler2011} and make the following assumptions on the data.
\begin{Assumption}\label{A:Regularity}
Let $y_d\in H^1(I,L^2(\Omega))$, and $y_d(T)\in \V$. Let further $g_i\in\V$, $i=1,\dots, D$, and finally $y_0\in\V$ with $\Delta y_0\in\V$.
\end{Assumption}

A lot of literature is available on optimal control problems with parabolic state equations. We refer to \cite{HPUU09} for a comprehensive discussion, and also to \cite{ApelFlaig2012,MeidnerVexler08a,MeidnerVexler08b,MeidnerVexler2011,SpringerVexler2013} for the most recent developments related to optimal control with Galerkin methods in time.

The paper is organized as follows. In section 2 we briefly summarize the solution theory of the optimal control problem. In section 3 we analyse the regularity of the state and the adjoint state, which plays an important role in the time discretization. In section 4 the time discretization of state and adjoint state is discussed in detail. In section 5 we introduce variational discretization of the optimal control problem $(P)$ and prove second order convergence of the variational discrete controls in time. In section 6 we present numerical results which confirm our analytical findings.

\section{The continuous problem $(\mathbb{P})$}
It is well known that problem \eqref{OCP} admits a unique solution $(\yopt,\uopt)\in Y\times U$, where $\yopt = S(B\uopt,y_0)$. Moreover, using the orthogonal projection $P_\Uad: L^2(I,\R^D) \rightarrow \Uad$, the optimal control is characterized by the first-order necessary and sufficient condition
 \begin{equation}\label{FONC}
\uopt = P_\Uad\left(-\frac{1}{\alpha}B'\bar p\right),
 \end{equation}
where $(\bar p,\bar q)\in L^2(I,\V)\times \H$ (here we use reflexivity of the involved spaces) denotes the adjoint variable which is the unique solution to
\begin{multline}\label{E:WAdj}
\int\limits_0^T \langle \partial_t \tilde y(t),\bar p(t)\rangle_{\Vd\V} + a(\tilde y(t),\bar p(t))\, dt + (\tilde y(0),\bar q)_{\H}\\
 = \int\limits_0^T\int\limits_\Omega (\yopt(t,x)-y_d(t,x)) \tilde y(t,x)\, dxdt \quad \forall\ \tilde y \in W(I).
\end{multline}
Here, $B': L^2(I,\V) \rightarrow L^2(I,\R^D)$ denotes the adjoint operator of $B$, which is characterized by
 \begin{equation}
 \label{E:Badj}
 B'q(t)=\left(\langle g_1,q(t)\rangle_{\Vd\V}\ ,\ \dots\ ,\ \langle g_D,q(t)\rangle_{\Vd\V}\right)^T.
 \end{equation}
Furthermore we note that for $v\in L^2(I,\R^D)$ there holds
\[
P_\Uad(v)(t) = \left(P_{[a_i,b_i]}(v_i(t))\right)_{i=1}^D,
\]
where for $a,b,z\in\mathbb{R}$ with $a\le b$ we set $P_{[a,b]}(z):=\max\{a,\min\{z,b\}\}$.

Since $\yopt-y_d \in L^2(I,L^2(\Omega))$ in \eqref{E:WAdj}, we have $\bar p\in W(I)$, so that by integration by parts for functions in $W(I)$ we conclude from \eqref{E:WAdj} (compare \eqref{E:WFM})
\begin{multline}\label{E:AA}
\int\limits_0^T -\langle \partial_t \bar p(t),\tilde y(t)\rangle_{\Vd\V} + a(\tilde y(t),\bar p(t))\, dt + \\
(\tilde y(0),\bar q)_{\H}+ (\tilde y(T),\bar p(T))_{\H} - (\tilde y(0),\bar p(0))_{\H} \\
 =\int\limits_0^T\int\limits_\Omega (\yopt(t,x)-y_d(t,x)) \tilde y(t,x)\, dxdt\quad\forall\ \tilde y\in W(I),
\end{multline}
so that the function $\bar p$ can be identified with the unique weak solution to the adjoint equation
\begin{equation}\label{E:Adj}
 \begin{aligned} -\partial_t \bar p -\Delta \bar p &= h &&\text{in }I\times\Omega\,,\\
\bar p&=0&&\text{on } I\times\partial\Omega\,,\\
\bar p(T)&=0 &&\text{on } \Omega, \end{aligned}
\end{equation}
with $h:=\bar y-y_d$. Moreover, $\bar q=\bar p(0).$


\section{Regularity results}
In this section we summarize some existence and regularity results concerning equation \eqref{E:State} and \eqref{E:Adj}, which can also be found in e.g. \cite{MeidnerVexler2011}. We abbreviate
\[
\normL{ \cdot}:=    \|\cdot\|_{L^2(I,L^2(\Omega))}\,,\quad\normRD{\cdot}:=\|\cdot\|_{L^2(I,\R^D)}.
\]
For the unique weak solutions $y$ to \eqref{E:State} and $p$ to \eqref{E:Adj} we have from \cite[Theorems 7.1.5 and 5.9.4]{Evans1998} the regularity results.
\begin{Lemma}\label{L:LowReg}
For $f,h\in L^2(\I,\H)$ and $\kappa\in \V$ the solutions $y$ of \eqref{E:State} and $p$ of \eqref{E:Adj} satisfy
\[
y,p\in L^2(\I,H^2(\Omega)\cap\V)\cap H^1(\I,\H)\hookrightarrow C([0,T],\V).
\]
Furthermore, with some constant $C>0$ there holds
\[
\normL{y}+\normL{ \partial_t y }+\normL{\Delta y}+\max_{t\in\bar\I}\|y(t)\|_{H^1(\Omega)}\le C \left\{\normL{f}+\|\kappa\|_{H^1(\Omega)}\right\},
\]
and
\[
\normL{ \partial_t p }+\normL{\Delta p}+\max_{t\in\bar\I}\|p(t)\|_{H^1(\Omega)}\le C \normL{h}.
\]
\end{Lemma}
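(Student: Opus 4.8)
The plan is to prove both estimates by the energy method that underlies \cite[Theorems 7.1.5 and 5.9.4]{Evans1998}, carrying out the formal computations on finite-dimensional Galerkin approximations and passing to the limit. I treat the state equation \eqref{E:State} in detail; the adjoint equation \eqref{E:Adj} then follows by reversing time.

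First I would recall the weak solution obtained by Galerkin approximation $y_m$ in the eigenbasis of the Dirichlet Laplacian. Testing with $y_m$ gives
\[
\tfrac12\tfrac{d}{dt}\|y_m\|_\H^2 + a(y_m,y_m) = (f,y_m)_\H,
\]
and integration together with Gronwall's inequality yields $y\in L^2(\I,\V)\cap L^\infty(\I,\H)$, $\partial_t y\in L^2(\I,\Vd)$, controlled by $\normL{f}+\|\kappa\|_\H$; in particular $\normL{y}$ is already dominated by the right-hand side. The crucial step is the improved estimate: on the Galerkin level, where $\partial_t y_m$ is an admissible test function, I would test with $\partial_t y_m$ to get
\[
\|\partial_t y_m\|_\H^2 + \tfrac12\tfrac{d}{dt}\|\nabla y_m\|_\H^2 = (f,\partial_t y_m)_\H \le \tfrac12\|f\|_\H^2 + \tfrac12\|\partial_t y_m\|_\H^2.
\]
Integrating over $\I$, absorbing the last term, and using $\|\nabla y_m(0)\|_\H\le\|\nabla\kappa\|_\H$ produces a bound for $\normL{\partial_t y_m}$ and $\max_{t}\|\nabla y_m(t)\|_\H$ in terms of $\normL{f}+\|\kappa\|_{H^1(\Omega)}$, uniform in $m$; weak lower semicontinuity of the norms transfers it to $y$. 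This is precisely where the hypothesis $\kappa\in\V$ enters, and where one must argue on the approximations rather than test directly, since a priori only $\partial_t y\in L^2(\I,\Vd)$.

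With $\partial_t y\in L^2(\I,\H)$ in hand I read \eqref{E:State} pointwise in $t$ as the elliptic identity $-\Delta y(t)=f(t)-\partial_t y(t)\in\H$ with $y(t)\in\V$. Here lies the main obstacle: since $\Omega$ is only convex polygonal, classical smooth-boundary elliptic regularity does not apply, and I must instead invoke the $H^2$-regularity of the Dirichlet Laplacian on convex polygonal domains (Grisvard's theory), which gives $\|y(t)\|_{H^2(\Omega)}\le C\|\Delta y(t)\|_\H$. Integrating in time yields $y\in L^2(\I,H^2(\Omega)\cap\V)$ and the bound on $\normL{\Delta y}$. Finally, the inclusion $L^2(\I,H^2\cap\V)\cap H^1(\I,\H)\hookrightarrow C([0,T],\V)$, a Lions--Magenes interpolation embedding, both gives meaning to the term $\max_{t\in\bar\I}\|y(t)\|_{H^1(\Omega)}$ and completes the first estimate.

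For the adjoint equation I would substitute $\tilde p(t):=\bar p(T-t)$, which turns the backward problem \eqref{E:Adj} with terminal datum $\bar p(T)=0$ into a forward heat equation for $\tilde p$ with source $h(T-\cdot)$ and \emph{vanishing} initial datum. The identical argument applies verbatim, and since now the initial datum is zero the right-hand side of every estimate contains only $\normL{h}$, which is exactly the stated bound for $p$.
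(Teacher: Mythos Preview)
Your proposal is correct and follows exactly the route the paper takes: the paper gives no independent proof but simply cites \cite[Theorems 7.1.5 and 5.9.4]{Evans1998}, whose arguments (Galerkin approximation, testing with $\partial_t y_m$, elliptic regularity, and the interpolation embedding) you have faithfully reproduced. Your explicit invocation of Grisvard's $H^2$-regularity on convex polygonal domains is in fact a refinement over the smooth-boundary setting in Evans and is the right thing to say here.
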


However, in order to achieve $\mathcal O (k^2)$-convergence we need more regularity, i.e., at least second weak time derivatives. From \cite[Proposition 2.1]{MeidnerVexler2011} we have

\begin{Lemma}\label{L:HighReg}
Let $f,h\in H^1(\I,\H)$, $f(0),\h(T)\in \V$, and $\kappa\in\V$ with $\Delta \kappa\in\V$. Then the solutions y of \eqref{E:State} and p of \eqref{E:Adj} satisfy
\[
y,p\in H^1(\I,H^2(\Omega)\cap\V)\cap H^2(\I,\H).
\]
With some constant $C>0$ we have the a priori estimates
\[
\normL{ \partial^2_t y }+\normL{\partial_t\Delta y}\le C\left\{ \| f\|_{H^1(\I,L^2(\Omega))}+\| f(0)\|_{H^1(\Omega)}+\|\kappa \|_{H^1(\Omega)}+\|\Delta\kappa \|_{H^1(\Omega)}\right\},
\]
and
\[
\normL{ \partial^2_t p }+\normL{\partial_t\Delta p}\le C\left\{ \|\h\|_{H^1(\I,L^2(\Omega))}+\|\h(T)\|_{H^1(\Omega)}\right\}.
\]
\end{Lemma}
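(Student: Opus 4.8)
The plan is to deduce the second-order space-time regularity from the first-order result of Lemma \ref{L:LowReg} by differentiating each equation once in time. Concretely, if $w:=\partial_t y$ (respectively $w:=\partial_t p$) again solves a parabolic problem of the same structure, with right-hand side $\partial_t f$ (respectively $\partial_t h$) and with an initial (respectively terminal) value determined by the equation, then Lemma \ref{L:LowReg} applied to $w$ delivers exactly $\partial_t^2(\,\cdot\,)$ and $\partial_t\Delta(\,\cdot\,)$ in $L^2(\I,\H)$ together with the asserted bounds. I treat the state equation \eqref{E:State} in detail and obtain the adjoint equation \eqref{E:Adj} by the time-reversed argument.

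For the state, setting $w:=\partial_t y$ and differentiating \eqref{E:State} in time gives
\[
\partial_t w-\Delta w=\partial_t f\ \text{ in }\I\times\Omega,\qquad w=0\ \text{ on }\I\times\partial\Omega,\qquad w(0)=\Delta\kappa+f(0),
\]
where the initial value is read off from \eqref{E:State} at $t=0$. The data of this problem meet the hypotheses of Lemma \ref{L:LowReg}: the right-hand side $\partial_t f$ belongs to $L^2(\I,\H)$ because $f\in H^1(\I,\H)$, and the initial datum $\Delta\kappa+f(0)$ belongs to $\V$ precisely by the two compatibility assumptions $\Delta\kappa\in\V$ and $f(0)\in\V$. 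Lemma \ref{L:LowReg} therefore yields $w\in L^2(\I,H^2(\Omega)\cap\V)\cap H^1(\I,\H)$, which is $y\in H^1(\I,H^2(\Omega)\cap\V)\cap H^2(\I,\H)$, together with
\[
\normL{\partial_t w}+\normL{\Delta w}\le C\bigl\{\normL{\partial_t f}+\|\Delta\kappa+f(0)\|_{H^1(\Omega)}\bigr\}.
\]
Since $\partial_t w=\partial_t^2 y$ and $\Delta w=\partial_t\Delta y$, and since $\normL{\partial_t f}\le\|f\|_{H^1(\I,L^2(\Omega))}$ and $\|\Delta\kappa+f(0)\|_{H^1(\Omega)}\le\|\Delta\kappa\|_{H^1(\Omega)}+\|f(0)\|_{H^1(\Omega)}$, this is the claimed estimate for $y$ (the extra $\|\kappa\|_{H^1(\Omega)}$ term only enlarges the right-hand side).

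For the adjoint I set $w:=\partial_t p$; differentiating \eqref{E:Adj} produces the backward equation $-\partial_t w-\Delta w=\partial_t h$ with homogeneous lateral data, while evaluating \eqref{E:Adj} at $t=T$ and using $p(T)=0$, hence $\Delta p(T)=0$, fixes the terminal value $w(T)=-h(T)$, which lies in $\V$ by the hypothesis $h(T)\in\V$. Applying the time-reversed analogue of the state estimate in Lemma \ref{L:LowReg} — for the backward problem with nonzero terminal datum this carries a term $\|w(T)\|_{H^1(\Omega)}$ in place of the initial term — and using $\|w(T)\|_{H^1(\Omega)}=\|h(T)\|_{H^1(\Omega)}$ together with $\normL{\partial_t h}\le\|h\|_{H^1(\I,L^2(\Omega))}$ gives $\normL{\partial_t^2 p}+\normL{\partial_t\Delta p}\le C\{\|h\|_{H^1(\I,L^2(\Omega))}+\|h(T)\|_{H^1(\Omega)}\}$, as claimed.

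The main obstacle is not the formal algebra above but the rigorous justification that $\partial_t y$ solves the differentiated problem with the stated initial value and enjoys bounds uniform enough to pass to the limit. I would carry this out via a spectral Galerkin approximation $y_m$ in the span of the first $m$ Dirichlet eigenfunctions of $-\Delta$, as in the proof of \cite[Theorem 7.1.5]{Evans1998}: the finite-dimensional equations may be differentiated in time, and the delicate quantity $\partial_t y_m(0)$ is computed directly from the equations at $t=0$, where the compatibility $\Delta\kappa+f(0)\in\V$ yields the bound $\|\partial_t y_m(0)\|_{H^1(\Omega)}\le C(\|\Delta\kappa\|_{H^1(\Omega)}+\|f(0)\|_{H^1(\Omega)})$ uniformly in $m$. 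Testing the differentiated Galerkin equations suitably and invoking Grönwall then gives $m$-uniform bounds on $\partial_t^2 y_m$ and $\partial_t\Delta y_m$ in $L^2(\I,\H)$, after which weak limits identify the estimates above for $y$; the adjoint is obtained by the same scheme run backward in time. A time-difference-quotient argument is also possible, but bounding the initial quotient $\delta^{-1}(y(\delta)-\kappa)$ in $H^1(\Omega)$ uniformly in $\delta$ is exactly where the compatibility must again be invoked, so the Galerkin route is the more transparent one.
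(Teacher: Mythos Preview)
Your argument is correct and is the standard route to such higher-regularity estimates: differentiate the equation in time, read off the initial (resp.\ terminal) value of $\partial_t y$ (resp.\ $\partial_t p$) from the equation itself using the compatibility hypotheses $\Delta\kappa,f(0)\in\V$ (resp.\ $h(T)\in\V$), and then feed the differentiated problem into Lemma~\ref{L:LowReg}. Your remark that the formal differentiation must be justified via Galerkin approximation (or, equivalently, difference quotients) is also to the point, and the uniform control of $\partial_t y_m(0)$ in $H^1(\Omega)$ is indeed precisely where the compatibility assumption enters.

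As for the comparison: the paper does not actually supply a proof of this lemma. It is quoted verbatim from \cite[Proposition~2.1]{MeidnerVexler2011}, so there is no ``paper's own proof'' to compare against beyond that citation. Your sketch is consistent with what one finds there and with the standard treatment in \cite[Theorem~7.1.5]{Evans1998} that you invoke.
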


From Lemma \ref{L:LowReg} we conclude that the optimal state $\yopt$ lives in
$H^1(\I,\H)$, and $\yopt(T)\in\V$.
Thus, by Lemma \ref{L:HighReg} and Assumption \ref{A:Regularity} the optimal adjoint state $\popt$ is an element of $H^2(\I,\H)$. It then follows from \eqref{E:Badj} that $B'\popt \in H^2(\I,\R^D)$. Furthermore, for $v\in W^{1,r}(\I,\R^D)$, $1\le r\le\infty$, one has
\[
\|\partial_t P_{\Uad}(v)\|_{L^r(\I,\R^D)}\le   \|\partial_t v\|_{L^r(\I,\R^D)},
\]
so that \eqref{FONC} and $B'\popt \in H^2(\I,\R^D)$ imply $\uopt\in W^{1,\infty}(\I,\R^D)$.

Hence, using our Assumption \ref{A:Regularity}, Lemma \ref{L:HighReg} is applicable to the solution of the state equation and one obtains the following result, see e.g. \cite[Proposition 2.3]{MeidnerVexler2011}.

\begin{Lemma}
Let Assumption \ref{A:Regularity} hold. For the unique solution $(\yopt,\uopt)$ of \eqref{OCP} and the corresponding adjoint state $\popt$ there holds
\[
\yopt,\popt\in H^1(\I,H^2(\Omega)\cap\V)\cap H^2(\I,\H)\,,\quad\text{and}\quad \uopt\in W^{1,\infty}(\I,\R^D)\,.
\]
\end{Lemma}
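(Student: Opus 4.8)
The plan is to assemble the result from the three ingredients already in place: Lemma~\ref{L:LowReg}, Lemma~\ref{L:HighReg}, and the bootstrapping argument sketched in the paragraph preceding the statement. The strategy is a regularity bootstrap: first establish enough regularity of $\yopt$ and $\popt$ to feed into the high-regularity Lemma~\ref{L:HighReg}, then use the first-order condition \eqref{FONC} to transfer that regularity to $\uopt$.

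First I would apply Lemma~\ref{L:LowReg} to the state equation $\yopt = S(B\uopt,y_0)$. Since $\uopt\in\Uad\subset L^2(\I,\R^D)$ and each $g_i\in\V$ by Assumption~\ref{A:Regularity}, the right-hand side $B\uopt$ lies in $L^2(\I,\H)$, and $y_0\in\V$; hence Lemma~\ref{L:LowReg} yields $\yopt\in H^1(\I,\H)$ together with $\yopt(T)\in\V$. This is the crucial intermediate output because it certifies that $h:=\yopt-y_d$ has exactly the regularity demanded by Lemma~\ref{L:HighReg} for the adjoint equation: $y_d\in H^1(\I,\H)$ and $y_d(T)\in\V$ by Assumption~\ref{A:Regularity}, so $h\in H^1(\I,\H)$ with $h(T)=\yopt(T)-y_d(T)\in\V$. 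Applying Lemma~\ref{L:HighReg} to \eqref{E:Adj} then delivers $\popt\in H^1(\I,H^2(\Omega)\cap\V)\cap H^2(\I,\H)$.

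Next I would transfer this to the control. From \eqref{E:Badj}, $B'\popt$ is the vector of pairings $\langle g_i,\popt(\cdot)\rangle_{\Vd\V}$; since $\popt\in H^2(\I,\H)$ and each $g_i$ is a fixed functional, $B'\popt\in H^2(\I,\R^D)\hookrightarrow W^{1,\infty}(\I,\R^D)$. The key stability estimate for the projection, $\|\partial_t P_{\Uad}(v)\|_{L^r}\le\|\partial_t v\|_{L^r}$, then shows that applying $P_{\Uad}$ in \eqref{FONC} cannot destroy the Lipschitz bound, so $\uopt\in W^{1,\infty}(\I,\R^D)$. Finally, with $\uopt\in W^{1,\infty}\subset H^1(\I,\R^D)$ in hand, the state forcing $B\uopt$ now lies in $H^1(\I,\H)$ with $B\uopt(0)\in\V$, and $y_0\in\V$ with $\Delta y_0\in\V$; a second application of Lemma~\ref{L:HighReg}, this time to the state equation, upgrades $\yopt$ to $H^1(\I,H^2(\Omega)\cap\V)\cap H^2(\I,\H)$, completing the claim.

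The main obstacle is the circularity in the bootstrap: the high regularity of $\yopt$ depends on the regularity of $\uopt$, which in turn depends on $\popt$, which depends on $\yopt$. The argument is saved because Lemma~\ref{L:LowReg} provides enough regularity of $\yopt$ to start the chain \emph{without} yet knowing $\uopt$ is Lipschitz—only the low-regularity state bound is needed to make $h$ admissible for the adjoint's high-regularity estimate. Thus the delicate point is verifying that the hypotheses of Lemma~\ref{L:HighReg} are met at each application, in particular the boundary-in-time conditions $f(0)\in\V$ and $h(T)\in\V$; these follow respectively from $B\uopt(0)=\sum_i\uopt_i(0)g_i\in\V$ (using $\uopt\in C(\bar\I)$ and $g_i\in\V$) and from $\yopt(T)\in\V$ established in the first step.
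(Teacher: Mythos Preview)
Your proposal is correct and follows essentially the same bootstrap argument as the paper: apply Lemma~\ref{L:LowReg} to $\yopt$ to obtain $\yopt\in H^1(\I,\H)$ and $\yopt(T)\in\V$, feed this into Lemma~\ref{L:HighReg} for $\popt$, transfer the $H^2$-regularity of $\popt$ to $\uopt\in W^{1,\infty}(\I,\R^D)$ via \eqref{FONC} and the projection stability, and finally apply Lemma~\ref{L:HighReg} to the state equation. Your discussion of the apparent circularity and why Lemma~\ref{L:LowReg} breaks it is exactly the point the paper relies on implicitly.
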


\section{Time discretization}\label{S:Tdiscr}
\def\j{{m}}\def\K{{M}}
Let $[0,T)=\bigcup_{\j=1}^\K I_m$, where the intervals $I_\j=[t_{\j-1},t_\j)$ are defined through the partition $0=t_0 <  t_1 < \dots < t_\K=T$.
Furthermore, let $t_m^*=\frac{t_{\j-1}+t_\j}{2}$ for $m=1,\dots,\K$ denote the interval midpoints. 
By $0=:t_0^* < t_1^* < \dots < t_\K^* < t_{\K+1}^*:=T$ we get the so-called \emph{dual partition} of $[0,T)$, namely $[0,T)=\bigcup_{\j=1}^{\K+1} I_m^*$, with $I_\j^*=[t_{\j-1}^*,t_\j^*)$.
The grid width of the first (primal) partition is defined by the mesh-parameters $k_m=t_m-t_{m-1}$ and
\[ k=\max_{1\le\j\le\K} k_\j. \]

On these partitions we define the Ansatz and test spaces of our Petrov Galerkin scheme for the numerical approximation of the optimal control problem \eqref{OCP} w.r.t. time. We set
\[
P_k:=\set{v\in C([0,T],\V)}{\restr{v}{I_\j}\in \mathcal P_1(I_\j,\V)}\hookrightarrow W(I),
\]
\[
P_k^*:=\set{v\in C([0,T],\V)}{\restr{v}{I_\j^*}\in \mathcal P_1(I_\j^*,\V)}\hookrightarrow W(I)
\]
and
\[
Y_k:=\set{v:[0,T]\rightarrow \V}{\restr{v}{I_\j}\in \mathcal P_0(I_\j,\V)}\,.
\]
Here, $\mathcal P_i(J,\V)$, $J\subset \bar I$, $i\in\{0,1\}$, denotes the set of polynomial functions in time of degree at most $i$ on the interval $J$ with values in $\V$.
We note that functions in $P_k\cup Y_k$ can be uniquely determined by $\K+1$ elements from $\V$.
Furthermore each function in $Y_k$ is also an element of $L^2(I,\V)$.\\
In what follows we frequently use the interpolation operators
\begin{enumerate}
\item $\mathcal P_{Y_k}:L^2(I,\V)\rightarrow Y_k$
\[ \restr{\mathcal  P_{Y_k} v}{I_\j}:=\frac{1}{k_\j}\int_{t_{\j-1}}^{t_\j} v dt \text{ for } m=1,\dots,\K, \text{ and } \mathcal P_{Y_k}v(T):=0\]
\item $\Pi_{Y_k} : C([0,T],\V)\rightarrow Y_k$
\[ \restr{\Pi_{Y_k} v }{I_\j}:= v\left(t_m^*\right) \text{ for } m=1,\dots,\K, \text{ and } \Pi_{Y_k}v(T) := v(T).\]
\item $\pi_{P_k^*} : C([0,T],\V)\cup Y_k\rightarrow P_k^*$
\[
\begin{aligned}
\restr{\pi_{P_k^*} v }{I_1^*\cup I_2^*} &:= v(t_1^*) + \frac{t-t_1^*}{t_2^*-t_1^*} (v(t_2^*)-v(t_1^*)),\\
\restr{\pi_{P_k^*} v }{I_\j^*} &:= v\left(t_{\j-1}^*\right) + \frac{t-t_{\j-1}^*}{t_\j^*-t_{\j-1}^*} (v(t_\j^*)-v(t_{\j-1}^*)), \text{ for } m=3,\dots,\K-1,\\
\restr{\pi_{P_k^*} v }{I_\K^*\cup I_{\K+1}^*} &:= v(t_{\K-1}^*) + \frac{t-t_{\K-1}^*}{t_\K^*-t_{\K-1}^*} (v(t_\K^*)-v(t_{\K-1}^*)).
\end{aligned}
\]

\end{enumerate}

To apply variational discretization to \eqref{OCP} we next introduce the
Petrov-Galerkin scheme for the approximation of the states. For this purpose we extend the bilinear form $A$ of \eqref{bilinA} from $W(I)$ to $W(I)\cup Y_k$, i.e. we consider $A$ as a mapping $A:W(I)\cup Y_k \times W(I)\rightarrow \mathbb{R}$. Then, according to \eqref{E:WFM} we for $(f,\kappa) \in L^2(I,\Vd)\times L^2(\Omega)$ consider the time-semidiscrete problem: Find $y_k\in Y_k$, such that
\begin{equation}\label{E:WFD}
A(y_k,v_k)= \int\limits_0^T \langle f(t),v_k(t)\rangle_{\Vd\V}\, dt +(\kappa,v_k(0))_{\H}
     \quad\forall\ v_k\in P_k.
\end{equation}
Then $y_k\in Y_k$ is uniquely determined. This follows from the fact that with
\[
y_k = \alpha_{M+1}\chi_{\{T\}} + \sum\limits_{i=1}^{M} \alpha_i \chi_{I_i}, \quad \alpha_i \in \V \text{ for } i=1,\dots,M+1,
\]
the coefficients $\alpha_i$ for $i=2,\dots, M$ are determined by a Crank-Nicolson
 scheme with a (Rannacher) smoothing step \cite{RR84} for $\alpha_1$, and
 $\alpha_{M+1}$ is uniquely determined by $\alpha_M$.

Note that in all of the following results $C$ denotes a generic, strict positive
 real constant that does not depend on quantities which appear to the right of it.

The following stability result will be useful in the later analysis.

\begin{Lemma}\label{L:yDiscrStab}
  Let $y_k\in Y_k$ solve \eqref{E:WFD} for $f\in \L2$ and $\kappa\in\H$ given.
 Then there exists a constant $C>0$ independent of the time mesh size $k$ such that
  \[
  \normL{y_k} \le C \left( \normL{f} + \norm{\kappa}_{\H} \right).
  \]
\end{Lemma}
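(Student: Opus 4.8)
The plan is to exploit the explicit Crank--Nicolson structure of \eqref{E:WFD} by testing with the nodal basis of $P_k$ and then running a discrete energy estimate. Writing $y_k=\sum_{i=1}^{M}\alpha_i\chi_{I_i}+\alpha_{M+1}\chi_{\{T\}}$ with $\alpha_i\in\V$, I would first insert into \eqref{E:WFD} the continuous, piecewise linear test function whose only nonzero nodal value is $\phi\in\V$ at the node $t_j$, and record the resulting recursion. Since $\partial_t v_k$ is piecewise constant and $\int_{I_i}v_k\,\d t=\tfrac{k_i}{2}(v_k(t_{i-1})+v_k(t_i))$ for linear $v_k$, the interior nodes $1\le j\le M-1$ yield $\langle\phi,\alpha_{j+1}-\alpha_j\rangle+\tfrac{k_j}{2}a(\alpha_j,\phi)+\tfrac{k_{j+1}}{2}a(\alpha_{j+1},\phi)=\int_{I_j\cup I_{j+1}}\langle f,v_k\rangle\,\d t$, the node $t_0$ gives the damping step $\langle\phi,\alpha_1\rangle+\tfrac{k_1}{2}a(\alpha_1,\phi)=\int_{I_1}\langle f,v_k\rangle\,\d t+(\kappa,\phi)_\H$, and the node $t_M$ determines $\alpha_{M+1}$ from $\alpha_M$; the latter is irrelevant for the $\normL{\cdot}$-bound, since $\{T\}$ has measure zero.

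Next I would bound the initial value. Testing the $t_0$-relation with $\phi=\alpha_1$ and discarding the nonnegative term $\tfrac{k_1}{2}a(\alpha_1,\alpha_1)$ gives $\norm{\alpha_1}_\H^2\le(\kappa,\alpha_1)_\H+\int_{I_1}\langle f,v_k\rangle\,\d t$. Estimating the hat function by $\normL{v_k}\le C\sqrt{k}\,\norm{\alpha_1}_\H$ and using Young's inequality yields $\norm{\alpha_1}_\H\le\norm{\kappa}_\H+C\sqrt{k}\,\normL{f}$, so that $\tfrac12\norm{\alpha_1}_\H^2\le C\big(\norm{\kappa}_\H^2+\normL{f}^2\big)$.

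The core is the energy step. For each $J\in\{2,\dots,M\}$ I would insert into \eqref{E:WFD} the global test function $v_k^{(J)}\in P_k$ with nodal values $v_k^{(J)}(t_j)=\tfrac12(\alpha_j+\alpha_{j+1})$ for $1\le j\le J-1$ and $v_k^{(J)}(t_j)=0$ otherwise. By linearity this reproduces the sum of the interior relations for $j=1,\dots,J-1$; the time-derivative contribution telescopes, $\sum_{j=1}^{J-1}\langle\tfrac12(\alpha_j+\alpha_{j+1}),\alpha_{j+1}-\alpha_j\rangle=\tfrac12\big(\norm{\alpha_J}_\H^2-\norm{\alpha_1}_\H^2\big)$, while the $a$-contribution equals $\sum_{j=1}^{J-1}a\big(\tfrac{k_j}{2}\alpha_j+\tfrac{k_{j+1}}{2}\alpha_{j+1},\,\tfrac12(\alpha_j+\alpha_{j+1})\big)$, which is nonnegative (on a uniform grid it is simply $\sum_j\tfrac{k}{4}\,a(\alpha_j+\alpha_{j+1},\alpha_j+\alpha_{j+1})$). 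Because $v_k^{(J)}(0)=0$ the $\kappa$-term drops out, leaving $\tfrac12\norm{\alpha_J}_\H^2\le\tfrac12\norm{\alpha_1}_\H^2+\int_0^T\langle f,v_k^{(J)}\rangle\,\d t$. Estimating $\int_0^T\langle f,v_k^{(J)}\rangle\,\d t\le\normL{f}\,\normL{v_k^{(J)}}\le C\,\normL{f}\max_{1\le j\le J}\norm{\alpha_j}_\H$ and taking the maximum over $J$, the resulting quadratic inequality for $Z:=\max_{1\le j\le M}\norm{\alpha_j}_\H$ gives $Z\le C\big(\norm{\kappa}_\H+\normL{f}\big)$. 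The claim then follows from $\normL{y_k}^2=\sum_{i=1}^{M}k_i\norm{\alpha_i}_\H^2\le T\,Z^2$.

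I expect the \emph{main obstacle} to be establishing nonnegativity of the $a$-contribution on general, nonuniform meshes and the careful bookkeeping of the hat-function norms; the telescoping of the time-derivative term and the vanishing of the $\kappa$-term are the structural features that let the estimate close by a single quadratic inequality, without recourse to a discrete Gronwall argument.
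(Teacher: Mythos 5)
Your reduction of \eqref{E:WFD} to the nodal equations, the bound on the initial step, and the telescoping of the time-derivative term are all correct, and on a \emph{uniform} grid your argument does close. But the step you yourself flag as ``the main obstacle'' is a genuine gap, not bookkeeping: the $a$-contribution
\[
\sum_{j=1}^{J-1} a\Bigl(\tfrac{k_j}{2}\alpha_j+\tfrac{k_{j+1}}{2}\alpha_{j+1},\,\tfrac12(\alpha_j+\alpha_{j+1})\Bigr)
=\sum_{j=1}^{J-1}\Bigl[\tfrac{k_j}{4}\norm{\nabla\alpha_j}_{\H}^2+\tfrac{k_j+k_{j+1}}{4}\,a(\alpha_j,\alpha_{j+1})+\tfrac{k_{j+1}}{4}\norm{\nabla\alpha_{j+1}}_{\H}^2\Bigr]
\]
is \emph{not} nonnegative for general partitions. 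Already for three intervals with $k_1=k_3=1$, $k_2=2$, choosing $\alpha_1=\alpha_3=-\tfrac32\alpha_2$ gives the value $-\tfrac18\norm{\nabla\alpha_2}_{\H}^2<0$, so the underlying quadratic form is indefinite even when adjacent step sizes differ only by a factor of two. Hence nonnegativity cannot be established as an algebraic fact about arbitrary coefficients; you would need special structure of the actual solution $(\alpha_j)$, which your argument does not provide, and absorbing a negative stiffness contribution is also impossible here because you have no control over the gradients $\norm{\nabla\alpha_j}_{\H}$. Since the Lemma carries no mesh restriction (Assumption \ref{A:grid} is introduced only later, for Lemma \ref{L:yprojprop}), a proof valid only for uniform grids does not establish the statement.

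The structural reason is your choice of test values $\tfrac12(\alpha_j+\alpha_{j+1})$, which makes the time-derivative term telescope but leaves the stiffness term without a sign. The paper makes the opposite choice: it tests \eqref{E:WFD} with the $v_k\in P_k$ determined by $v_k(T)=0$ and $\restr{\partial_t v_k}{I_m}=-\restr{y_k}{I_m}$, i.e. $v_k(t)=\int_t^T y_k(s)\,ds$, whose nodal values are the weighted tail sums $\sum_{i>j}k_i\alpha_i$. With this choice it is the stiffness term that telescopes,
\[
\int\limits_0^T a(y_k,v_k)\,dt=-\int\limits_0^T(\partial_t\nabla v_k,\nabla v_k)_{\H}\,dt=\tfrac12\norm{\nabla v_k(0)}_{\H}^2\ \ge 0,
\]
while the time-derivative term produces exactly $\normL{y_k}^2$; Cauchy--Schwarz, Poincar\'e and Young then finish the proof for \emph{arbitrary} partitions, with no recursion, no separate treatment of the damping step, and no quadratic inequality in $\max_j\norm{\alpha_j}_{\H}$. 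If you want to keep your nodal framework, the repair is to replace your test values by these tail sums --- that is precisely the paper's proof written out node by node.
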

\begin{proof}
We test in \eqref{E:WFD} with the $v_k\in P_k$ that is uniquely determined by
$v_k(T):=0$ and $\restr{\partial_t v_k}{I_m}:=-\restr{y_k}{I_m}$, $m=1,\dots,M$.
We get using integration by parts in $W(I)$
\begin{equation*}
\begin{aligned}
  A(y_k,v_k)
  &= \normL{y_k}^2 - \int\limits_0^T 
       (\partial_t\nabla v_k,\nabla v_k)_{\H}\, dt\\
  &= \normL{y_k}^2 + \frac 12 \left( \norm{\nabla v_k(0)}_{\H}^2
              -\norm{\nabla v_k(T)}_{\H}^2  \right)\\
  &= \normL{y_k}^2 +\frac 12 \norm{\nabla v_k(0)}_{\H}^2
     \overset{\eqref{E:WFD}}{=}
     \int\limits_0^T (f,v_k)_{\H}\, dt + (\kappa,v_k(0))_{\H}\\
  &\le \frac 12\left( \normL{y_k}^2+\norm{\nabla v_k(0)}_{\H}^2\right) 
      + C\left(\normL{f}^2 + \norm{\kappa}_{\H}^2 \right),
\end{aligned}
\end{equation*}
where we use the Cauchy-Schwarz inequality and Poincar\'{e}'s
inequality. Rearranging terms and taking the square root yields the claim.
\end{proof}

For Petrov Galerkin approximations $y_k \in Y_k$ of states $y\in W(I)$ we can only expect $\mathcal O(k)$ convergence, since $y_k$ is piecewise constant in time, compare \cite[Lemma 5.2]{MeidnerVexler2011}. In order to obtain $\mathcal O(k^2)$ control approximations in our convergence analysis for problem \eqref{OCP} we rely on the following super-convergence results for the projections $\Pi_{Y_k}$ and $\mathcal P_{Y_k}$, see e.g. \cite[Lemma 5.3]{MeidnerVexler2011}.
\begin{Lemma}\label{L:Supercloseness} Let $(f,\kappa)$ satisfy the regularity requirements of Lemma \ref{L:HighReg}, and let $y,y_k$ solve \eqref{E:WF} and \eqref{E:WFD} with data $(f,\kappa)$, thus $y\in H^1\left(\I,H^2(\Omega)\cap\V\right)\cap H^2\left(\I,\H\right)$. Then there holds
\[
\normL{y_k-\Pi_{Y_k}y}\le C k^2\left(\normL{\partial_t^2y}+\normL{\partial_t\Delta y}\right).
\]
\end{Lemma}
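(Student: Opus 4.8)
The plan is to avoid comparing $y_k$ with the midpoint interpolant $\Pi_{Y_k}y$ directly, and instead to route through the interval-mean projection $\mathcal P_{Y_k}y$, using the split
\[
\normL{y_k-\Pi_{Y_k}y}\le \normL{y_k-\mathcal P_{Y_k}y}+\normL{\mathcal P_{Y_k}y-\Pi_{Y_k}y}.
\]
The second summand is a pure quadrature estimate and supplies the $\partial_t^2y$ contribution: on each $I_m$ one has $(\mathcal P_{Y_k}y-\Pi_{Y_k}y)|_{I_m}=\frac1{k_m}\int_{I_m}y\,dt-y(t_m^*)$, the midpoint-rule error, which by a Taylor expansion around $t_m^*$ (the linear term integrates to zero over the symmetric interval) is $\le Ck_m^{3/2}\norm{\partial_t^2y}_{L^2(I_m,\H)}$; squaring, summing in $m$ and using $k_m\le k$ yields $\normL{\mathcal P_{Y_k}y-\Pi_{Y_k}y}\le Ck^2\normL{\partial_t^2y}$. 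This step is routine.

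For the first summand I would first record Galerkin orthogonality: testing \eqref{E:WFM} and \eqref{E:WFD} with the same $v_k\in P_k$ and subtracting gives $A(y-y_k,v_k)=0$, hence $A(y_k-\mathcal P_{Y_k}y,v_k)=A(y-\mathcal P_{Y_k}y,v_k)$ for all $v_k\in P_k$. Exactly as in the proof of Lemma \ref{L:yDiscrStab} I would then test with the $v_k\in P_k$ fixed by $v_k(T)=0$ and $\partial_t v_k|_{I_m}=-(y_k-\mathcal P_{Y_k}y)|_{I_m}$, which produces the coercive identity
\[
A(y_k-\mathcal P_{Y_k}y,v_k)=\normL{y_k-\mathcal P_{Y_k}y}^2+\tfrac12\norm{\nabla v_k(0)}_{\H}^2.
\]

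It remains to bound the right-hand side $A(y-\mathcal P_{Y_k}y,v_k)$, and here the choice of the mean pays off twice. In \eqref{bilinA} the endpoint term drops since $v_k(T)=0$, and the time-derivative term vanishes identically because $\partial_t v_k$ is piecewise constant while $\int_{I_m}(y-\mathcal P_{Y_k}y)\,dt=0$ on every $I_m$. Only the elliptic part $\int_0^T a(y-\mathcal P_{Y_k}y,v_k)\,dt$ survives; integrating by parts in space — legitimate since $y(t)\in H^2(\Omega)\cap\V$ by Lemma \ref{L:HighReg} and $\Delta$ commutes with the temporal averaging $\mathcal P_{Y_k}$ — rewrites it as $-\int_0^T(\Delta y-\mathcal P_{Y_k}\Delta y,v_k)_{\H}\,dt$. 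Writing $v_k(t)=v_k(t_m^*)+(t-t_m^*)\partial_t v_k|_{I_m}$ on $I_m$, the constant part is annihilated by the zero-mean property of $\Delta y-\mathcal P_{Y_k}\Delta y$, leaving only the linear part; the surviving factor $(t-t_m^*)$ gives one extra power of $k_m$, so that $\|\int_{I_m}(t-t_m^*)(\Delta y-\mathcal P_{Y_k}\Delta y)\,dt\|_{\H}\le Ck_m^{5/2}\norm{\partial_t\Delta y}_{L^2(I_m,\H)}$. Pairing this against $\partial_t v_k|_{I_m}=-(y_k-\mathcal P_{Y_k}y)|_{I_m}$ and applying Cauchy–Schwarz in $m$ bounds the elliptic term by $Ck^2\normL{\partial_t\Delta y}\normL{y_k-\mathcal P_{Y_k}y}$. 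Inserting this into the coercive identity and dividing by $\normL{y_k-\mathcal P_{Y_k}y}$ gives $\normL{y_k-\mathcal P_{Y_k}y}\le Ck^2\normL{\partial_t\Delta y}$, and the triangle inequality then delivers the claim.

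The main obstacle is precisely the elliptic term, and the decisive insight is to measure supercloseness against the mean $\mathcal P_{Y_k}y$ rather than the midpoint interpolant. Only the mean makes the time-derivative term cancel exactly and exposes the zero-mean structure that upgrades the otherwise merely first-order midpoint error of $\Delta y$ to second order — crucial because the available regularity controls only one time derivative $\partial_t\Delta y$, not $\partial_t^2\Delta y$. A direct attack on $A(y-\Pi_{Y_k}y,v_k)$ would fail: there the time-derivative term does not cancel, and the constant part of $v_k$ is no longer annihilated, so the extra power of $k_m$ is lost.
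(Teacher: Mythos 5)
Your proof is correct, and it takes a genuinely different route from the paper: the paper does not prove this lemma at all, but cites \cite[Lemma 5.3]{MeidnerVexler2011} and merely remarks that the argument there applies because the initial value $\kappa$ is the same in \eqref{E:WF} and \eqref{E:WFD}. Your argument is self-contained and inverts the paper's logical order: the paper deduces the interval-mean estimate (Corollary \ref{C:Supercloseness}) from the midpoint estimate of this lemma via the quadrature bound \eqref{E:ProjEst}, whereas you prove the interval-mean estimate directly --- via Galerkin orthogonality $A(y-y_k,v_k)=0$ for all $v_k\in P_k$ (valid precisely because both problems share the data $(f,\kappa)$, the same point the paper's remark hinges on), the test function of Lemma \ref{L:yDiscrStab} applied to $e_k:=y_k-\mathcal P_{Y_k}y\in Y_k$, and the zero-mean structure of $y-\mathcal P_{Y_k}y$ --- and then recover the midpoint version by adding \eqref{E:ProjEst}. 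The key steps all check out: the coercive identity $A(e_k,v_k)=\normL{e_k}^2+\frac12\norm{\nabla v_k(0)}_{\H}^2$ is exactly the computation of Lemma \ref{L:yDiscrStab}; the temporal term of $A(y-\mathcal P_{Y_k}y,v_k)$ vanishes since the piecewise constant $\partial_t v_k$ is paired with a function of zero mean on each $I_m$; $\Delta$ commutes with the Bochner integral defining $\mathcal P_{Y_k}$ thanks to $y\in H^1(I,H^2(\Omega)\cap\V)$; and in the surviving elliptic term the zero-mean property kills the constant part of $v_k$, while the factor $(t-t_m^*)$ together with $\norm{\restr{e_k}{I_m}}_{\H}=k_m^{-1/2}\norm{e_k}_{L^2(I_m,\H)}$ and Cauchy--Schwarz over $m$ yields the bound $Ck^2\normL{\partial_t\Delta y}\normL{e_k}$. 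Your route buys three things: the result no longer relies on transferring the Meidner--Vexler proof to the switched Ansatz/test spaces; Corollary \ref{C:Supercloseness} comes for free, indeed in the sharper form $\normL{y_k-\mathcal P_{Y_k}y}\le Ck^2\normL{\partial_t\Delta y}$, with $\partial_t^2y$ entering only through the quadrature step; and it explains structurally why the interval mean is the right intermediate object when only $\partial_t\Delta y$, not $\partial_t^2\Delta y$, is controlled. What the paper's citation buys is brevity. One small imprecision in your closing remark: in a direct attack on $A(y-\Pi_{Y_k}y,v_k)$ the temporal term is a midpoint-rule error that could still be absorbed using $\partial_t^2 y$; the genuine obstruction is, as you then correctly identify, the elliptic term, where the lost annihilation of the constant part of $v_k$ cannot be compensated without $\partial_t^2\Delta y$.
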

Note that the proof of {\cite[Lemma 5.3]{MeidnerVexler2011}} is applicable in our situation since the initial value $\kappa$ is the same for both, the continuous problem \eqref{E:WF} and \eqref{E:WFD}.

\begin{Corollary}\label{C:Supercloseness}
Let the assumptions of Lemma \ref{L:Supercloseness} hold. Then there holds
\[
\normL{y_k-\mathcal P_{Y_k}y}\le C k^2\left(\normL{\partial_t^2y}+\normL{\partial_t\Delta y}\right)\,.
\]
\end{Corollary}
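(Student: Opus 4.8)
The plan is to reduce the new estimate for $\mathcal P_{Y_k}y$ to the already-established estimate for $\Pi_{Y_k}y$ from \Lemmaref{L:Supercloseness} by controlling the difference between the two interpolation operators. By the triangle inequality,
\[
\normL{y_k-\mathcal P_{Y_k}y}\le \normL{y_k-\Pi_{Y_k}y}+\normL{\Pi_{Y_k}y-\mathcal P_{Y_k}y}.
\]
The first term is already bounded by $Ck^2(\normL{\partial_t^2y}+\normL{\partial_t\Delta y})$ via \Lemmaref{L:Supercloseness}, so the entire task is to establish the same $\mathcal O(k^2)$ bound for the purely interpolation-theoretic difference $\normL{\Pi_{Y_k}y-\mathcal P_{Y_k}y}$, which no longer involves the discrete solution $y_k$ at all.

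For that second term I would work interval by interval. On each $I_m=[t_{m-1},t_m)$ both operators produce the constant-in-time element of $\V$ given respectively by the midpoint value $y(t_m^*)$ and the interval average $\frac{1}{k_m}\int_{t_{m-1}}^{t_m} y\,dt$. Their difference is exactly the midpoint quadrature error for the $\V$-valued (or $\H$-valued) function $t\mapsto y(t)$ on $I_m$. The standard midpoint rule error identity, applied to the Bochner integral, gives a representation in terms of the second time derivative,
\[
y(t_m^*)-\frac{1}{k_m}\int_{t_{m-1}}^{t_m} y\,dt
 = -\frac{1}{k_m}\int_{t_{m-1}}^{t_m}\!\!\int_{t_m^*}^{s}(s-\sigma)\,\partial_t^2 y(\sigma)\,d\sigma\,ds,
\]
so that taking $\H$-norms and using Cauchy--Schwarz yields
$\|y(t_m^*)-\mathcal P_{Y_k}y|_{I_m}\|_{\H}^2 \le C k_m^3 \int_{t_{m-1}}^{t_m}\|\partial_t^2 y(\sigma)\|_{\H}^2\,d\sigma$ (with the extra power of $k_m$ coming from the outer integration in time). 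Multiplying by the interval length $k_m$ from the time-integration in $\normL{\cdot}^2$, summing over $m$, and bounding each $k_m\le k$ then produces $\normL{\Pi_{Y_k}y-\mathcal P_{Y_k}y}^2\le C k^4 \normL{\partial_t^2 y}^2$, i.e. the desired $\mathcal O(k^2)$ bound in the norm.

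The last interval requires a small separate remark, since by definition $\mathcal P_{Y_k}y(T):=0$ and $\Pi_{Y_k}y(T):=y(T)$, but $\{T\}$ has measure zero in the time integration defining $\normL{\cdot}$, so it contributes nothing and can be ignored. The main obstacle, such as it is, is purely technical rather than conceptual: one must justify the midpoint quadrature error formula in the Bochner-integral (vector-valued) setting and verify that $y\in H^2(\I,\H)$ supplies exactly the regularity needed for $\partial_t^2 y\in \L2$ to make the right-hand side finite. This regularity is guaranteed by the hypothesis of \Lemmaref{L:Supercloseness}, which places $y\in H^2(\I,\H)$, so the argument closes cleanly and the $\normL{\partial_t\Delta y}$ term is simply inherited from the bound on $\normL{y_k-\Pi_{Y_k}y}$.
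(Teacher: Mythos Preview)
Your proof is correct and essentially identical to the paper's: both use the triangle inequality to reduce to bounding $\normL{\Pi_{Y_k}y-\mathcal P_{Y_k}y}$, then exploit the Taylor/midpoint-quadrature remainder on each $I_m$ together with Cauchy--Schwarz to obtain $\normL{\Pi_{Y_k}y-\mathcal P_{Y_k}y}\le k^2\normL{\partial_t^2 y}$. The only cosmetic difference is that the paper first argues for $w\in C^2(\I,\H)\cap H^2(\I,\H)$ and then passes to general $y\in H^2(\I,\H)$ by density, whereas you work directly with the integral-remainder form (which is indeed valid for $H^2$ functions).
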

  \begin{proof}
 With the result of Lemma \ref{L:Supercloseness} at hand it suffices to show that
 \begin{equation}\label{E:ProjEst}
 \normL{\Pi_{Y_k}y-\mathcal P_{Y_k}y}\le  k^2\normL{\partial_t^2y}
 \end{equation}
holds. We prove this estimate for smooth functions $w\in C^2(\I,\H)\cap H^2(\I,\H)$. The result then follows by a density argument.

Suppose $w\in C^2(\I,\H)\cap H^2(\I,\H)$. We use the Taylor expansion of $w$ at $t_m^*$ and obtain
\begin{multline}
\left\|\int_{t_{\j-1}}^{t_\j}w(t)-w(t^*_\j)dt\right\|_\H^2
=\left\|\int_{t_{\j-1}}^{t_\j}(t-t^*_\j)\partial_t w(t^*_\j) + \int_{t^*_\j}^t (t-s)\partial_t^2 w(s)ds dt\right\|_\H^2\\
\le k_\j \int_{t_{\j-1}}^{t_\j}\left\| \int_{t^*_\j}^t (t-s)\partial_t^2 w(s)ds \right\|_\H^2 dt
\le k_\j^4\int_{t_{\j-1}}^{t_\j} \int_{t^*_\j}^t \left\|\partial_t^2 w(s) \right\|_\H^2 ds dt\\
\le k^5_\j\int_{t_{\j-1}}^{t_\j}  \left\|\partial_t^2 w(s) \right\|_\H^2 ds,
\end{multline}

where we have used the Cauchy-Schwarz inequality twice. This proves
\[
\left(\sum_{\j=1}^\K k_\j\left\|\frac{1}{k_\j}\int_{t_{\j-1}}^{t_\j}w(t)-w(t^*_m) dt\right\|_\H^2\right)^{\frac{1}{2}}\le  k^2\normL{\partial_t^2w},
\]
which is \eqref{E:ProjEst}.
\end{proof}

For the next Lemma, see \cite[Lemma 5.6]{MeidnerVexler2011}, we need the following condition on the time grid:
\begin{Assumption}\label{A:grid}
  There exist constants $0 < c_1 \le c_2 < \infty$ independent of $k$ such that
  \[ c_1 \le \frac{k_m}{k_{m+1}} \le c_2 \]
  holds for all $m=1,2,\dots,\K-1$.
\end{Assumption}

\begin{Lemma}\label{L:yprojprop}
  Let the Assumption \ref{A:grid} be fulfilled. The interpolation operator $\pi_{P_k^*}$ has the following properties, where $C>0$ in both cases denotes a constant independent of $k$.
  \begin{enumerate}
    \item $\normL{w-\pi_{P_k^*}w} \le C k^2 \normL{\partial_t^2 w} 
            \quad\forall\ w\in H^2(I,\H)$,
    \item $\normL{\pi_{P_k^*}w_k} \le C \normL{w_k} 
            \quad\forall\ w_k\in Y_k$.
  \end{enumerate}
\end{Lemma}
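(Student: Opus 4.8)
The plan is to prove both estimates locally, dual interval by dual interval, and then sum. Two structural facts drive everything: first, $\pi_{P_k^*}$ reproduces affine functions exactly, since on each piece it is the unique linear function through two of the sampled values $w(t_m^*)$ (point evaluations being well defined for $w\in H^2(I,\H)\hookrightarrow C([0,T],\H)$ and for $w_k\in Y_k$); second, on the interior dual intervals $I_m^*$, $m=3,\dots,M-1$, the two sample points $t_{m-1}^*,t_m^*$ are precisely the endpoints of $I_m^*$, so there $\pi_{P_k^*}w$ is the classical linear interpolant, whereas on $I_1^*\cup I_2^*$ and on $I_M^*\cup I_{M+1}^*$ a single linear function is used on two cells, so that genuine extrapolation occurs on $I_1^*$ and on $I_{M+1}^*$.

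For part (i) I would estimate the local $L^2$-error on each dual interval. On an interior interval of length $\ell_m:=t_m^*-t_{m-1}^*=(k_{m-1}+k_m)/2$, affine exactness together with the integral (Peano) form of the Taylor remainder — exactly the device used in the proof of Corollary~\ref{C:Supercloseness} — yields the standard local bound $\norm{w-\pi_{P_k^*}w}_{L^2(I_m^*,\H)}\le C\ell_m^2\,\norm{\partial_t^2 w}_{L^2(I_m^*,\H)}$. On the two boundary cells $I_1^*$ and $I_{M+1}^*$ the same remainder representation applies, the only difference being that the evaluation point lies outside the interval spanned by the two sample points; since the extrapolation distances $t_1^*$ and $T-t_M^*$ are bounded by the corresponding interpolation base lengths $t_2^*-t_1^*$ and $t_M^*-t_{M-1}^*$, one still obtains a local $O(k^2)$ bound in terms of $\norm{\partial_t^2 w}$ over the enclosing region $[0,t_2^*]$, respectively $[t_{M-1}^*,T]$. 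Squaring, summing over $m$, and using that the $I_m^*$ partition $I$ (with only bounded overlap at the two boundary regions) together with $\ell_m\le k$, gives $\normL{w-\pi_{P_k^*}w}\le Ck^2\normL{\partial_t^2 w}$.

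For part (ii) write $w_k=\sum_{m}\alpha_m\chi_{I_m}$ with $\alpha_m\in\V$; since $t_m^*$ is the midpoint of $I_m$ one has $w_k(t_m^*)=\alpha_m$, so on each interior $I_m^*$ the function $\pi_{P_k^*}w_k$ is the affine blend of $\alpha_{m-1}$ and $\alpha_m$, whence $\norm{\pi_{P_k^*}w_k}_{L^2(I_m^*,\H)}^2\le C\ell_m\big(\norm{\alpha_{m-1}}_\H^2+\norm{\alpha_m}_\H^2\big)$; on the boundary cells the coefficients of the affine blend stay bounded because the extrapolation distance again does not exceed the base length. Summing, the factor multiplying a fixed $\norm{\alpha_j}_\H^2$ is $C(\ell_j+\ell_{j+1})=C(k_{j-1}+2k_j+k_{j+1})/2$, and here Assumption~\ref{A:grid} is indispensable: it bounds $k_{j-1}$ and $k_{j+1}$ by a constant multiple of $k_j$, so that factor is $\le Ck_j$ and hence $\normL{\pi_{P_k^*}w_k}^2\le C\sum_j k_j\norm{\alpha_j}_\H^2=C\normL{w_k}^2$.

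The routine parts are the interior interpolation estimates and the reduction to one-dimensional affine algebra for the blends. The two places demanding care — and where the hypotheses actually bite — are the boundary extrapolation, which must be shown not to spoil second order in (i) nor boundedness in (ii), and the bookkeeping in the summation of (ii), where the mesh-regularity Assumption~\ref{A:grid} is precisely what keeps the stability constant independent of $k$; this last point I expect to be the main obstacle.
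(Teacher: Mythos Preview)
Your argument is correct. Note, however, that the paper does not give its own proof of this lemma: it simply refers to \cite[Lemma~5.6]{MeidnerVexler2011} (see the sentence immediately preceding the statement). So there is no ``paper's proof'' to compare against here; what you have written is a direct, self-contained justification along standard lines, and it is presumably close in spirit to the argument in the cited reference.

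Two minor remarks. First, your treatment makes clear that Assumption~\ref{A:grid} is only genuinely needed for part~(ii); in part~(i) the local interval lengths $\ell_m=(k_{m-1}+k_m)/2$ and the boundary extrapolation distances are bounded by $k$ without any quasi-uniformity, so the $O(k^2)$ approximation bound holds on arbitrary partitions. Second, in part~(i) the lemma is stated for $w\in H^2(I,\H)$ rather than $H^2(I,\V)$, so strictly speaking $\pi_{P_k^*}w$ lands in the $\H$-valued analogue of $P_k^*$; this is a harmless abuse since only the $\normL{\cdot}$ norm is used, and your Taylor/Peano argument goes through verbatim for $\H$-valued functions.
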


Since the state is discretized by piecewise constant functions, we can only expect first order convergence in time for its discretization error. The following Lemma shows that a projected version of the discretized state converges second order in time to the continuous state. The benefit of this result will be discussed in the numerics section.

\begin{Lemma}\label{L:convyproj}
Let $y$ and $y_k$ be given as in Lemma \ref{L:Supercloseness}. Then there holds
\[ \normL{\pi_{P_k^*}y_k - y} \le C k^2 \left( \normL{\partial_t^2 y} + \normL{\partial_t \Delta y}\right). \]
\end{Lemma}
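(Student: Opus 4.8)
The plan is to reduce everything to the already-established superconvergence and projection estimates by inserting the piecewise-constant interpolant $\Pi_{Y_k}y$ as an intermediate quantity. First I would split
\[
\pi_{P_k^*}y_k - y = \pi_{P_k^*}\left(y_k - \Pi_{Y_k}y\right) + \left(\pi_{P_k^*}\Pi_{Y_k}y - y\right),
\]
and then exploit the crucial structural observation that $\pi_{P_k^*}$ only sees the values of its argument at the interior dual nodes $t_1^*,\dots,t_M^*$, which are exactly the midpoints of the primal intervals $I_1,\dots,I_M$. Since $\Pi_{Y_k}y$ is piecewise constant with $(\Pi_{Y_k}y)(t_m^*)=y(t_m^*)$ for $m=1,\dots,M$, the functions $\Pi_{Y_k}y$ and $y$ feed the same nodal data into $\pi_{P_k^*}$, whence $\pi_{P_k^*}\Pi_{Y_k}y = \pi_{P_k^*}y$. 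This identity is the heart of the argument and is precisely what makes the post-processed state come "for free"; it lets me rewrite the second summand as $\pi_{P_k^*}y - y$.

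For the first summand I would note that $y_k-\Pi_{Y_k}y\in Y_k$, so the stability bound of Lemma \ref{L:yprojprop}(2) applies and gives $\normL{\pi_{P_k^*}(y_k-\Pi_{Y_k}y)}\le C\normL{y_k-\Pi_{Y_k}y}$; the supercloseness estimate of Lemma \ref{L:Supercloseness} then controls the right-hand side by $Ck^2\left(\normL{\partial_t^2 y}+\normL{\partial_t\Delta y}\right)$. For the second summand I would invoke the approximation property of Lemma \ref{L:yprojprop}(1), which yields $\normL{\pi_{P_k^*}y - y}\le Ck^2\normL{\partial_t^2 y}$; here I use that $y\in H^2(\I,\H)$ by Lemma \ref{L:Supercloseness} and that $y\in C([0,T],\V)$ by Lemma \ref{L:LowReg}, so that all interpolants are well defined. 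Adding the two contributions via the triangle inequality gives the asserted bound. I should also keep in mind that Lemma \ref{L:yprojprop} presupposes Assumption \ref{A:grid}, so that hypothesis is in force throughout.

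The step I expect to require the most care is verifying the identity $\pi_{P_k^*}\Pi_{Y_k}y=\pi_{P_k^*}y$ rigorously at the two ends of the time interval. There $\pi_{P_k^*}$ is defined by extrapolation on $I_1^*\cup I_2^*$ and on $I_M^*\cup I_{M+1}^*$ rather than by local interpolation, so I would carefully check that the first and last interval groups indeed only use the midpoint values $y(t_1^*),y(t_2^*)$, respectively $y(t_{M-1}^*),y(t_M^*)$ — and in particular do not involve $y(0)$ or $y(T)$ — and hence that the boundary value $(\Pi_{Y_k}y)(T)=y(T)$ plays no role. Once this bookkeeping at the endpoints is settled, the remaining estimates follow immediately from the cited lemmas.
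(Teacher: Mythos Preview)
Your proposal is correct and follows exactly the paper's own argument: the same splitting via $\Pi_{Y_k}y$, the same identity $\pi_{P_k^*}\Pi_{Y_k}y=\pi_{P_k^*}y$, and the same appeal to Lemma~\ref{L:yprojprop} (both parts) together with Lemma~\ref{L:Supercloseness}. Your write-up is in fact more explicit than the paper's proof about why the endpoint definitions of $\pi_{P_k^*}$ make the key identity hold.
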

\begin{proof}
Making use of the splitting
\[
\normL{\pi_{P_k^*}y_k - y}  = \normL{\pi_{P_k^*}\left( y_k - \Pi_{Y_k} y\right)}  + \normL{\pi_{P_k^*}\Pi_{Y_k} y - y}
= \normL{\pi_{P_k^*}\left( y_k - \Pi_{Y_k} y\right)}  + \normL{\pi_{P_k^*} y - y},
\]
the claim is an immediate consequence of Lemma \ref{L:yprojprop} and Lemma \ref{L:Supercloseness}.
\end{proof}

In the numerical treatment of problem \eqref{OCP} we also need error estimates for discrete adjoint functions $p_k\in P_k \hookrightarrow W(I)$. For $h \in L^2(I,\Vd)$ we consider the problem: Find $p_k\in P_k$ such that
\begin{equation}\label{E:AdjDiscr}
A(\tilde y,p_k)
=\int\limits_0^T \langle h(t),\tilde y(t)\rangle_{\Vd\V}\, dt
\quad\forall\ \tilde y \in Y_k.
\end{equation}
This problem admits a unique solution $p_k \in P_k$. 
This follows from the fact that if we write
\[
p_k(t) = \sum\limits_{i=0}^M \beta_i b_i(t)
\]
with coefficients $\beta_i \in \V$ and $b_i \in C([0,T])$, 
$b_i(t_j)=\delta_{ij}$, for $i,j=0,\dots,M$, the coefficients $\beta_i$ are
determined by a backward in time Crank-Nicolson scheme, starting with 
$\beta_M\equiv 0$. Similar to \cite[Lemma 4.7]{MeidnerVexler2011} we have
 the following stability result, which we need to prove Theorem 
\ref{T:2ndorderstate}.
\begin{Lemma}\label{L:AdjDiscrStab}
  Let $p_k\in P_k$ solve \eqref{E:AdjDiscr} with $\h\in \L2$.
 Then there exists a constant $C>0$ independent of $k$ such that
  \[
      \norm{p_k}_{H^1(I,\H)}+\norm{p_k(0)}_{H^1(\Omega)} \le C\normL{\h}.
  \]
\end{Lemma}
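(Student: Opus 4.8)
The plan is to run a discrete energy estimate dual to the one used for Lemma \ref{L:yDiscrStab}, now testing \eqref{E:AdjDiscr} with $\tilde y=-\partial_t p_k$. Since $p_k\in P_k$ is continuous and piecewise affine in time, its derivative $\partial_t p_k$ is piecewise constant with values in $\V$, so that $\tilde y:=-\partial_t p_k$ (extended by the value $0$ at $T$) is a genuine admissible test function in $Y_k$.

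First I would pin down the terminal value. Testing \eqref{E:AdjDiscr} with $\tilde y=\chi_{\{T\}}\phi$, $\phi\in\V$, makes every interval contribution and the whole right-hand side vanish, leaving $(\phi,p_k(T))_\H=0$; hence $p_k(T)=\beta_M=0$. This is exactly the starting value $\beta_M\equiv 0$ of the backward Crank-Nicolson recursion noted after \eqref{E:AdjDiscr}, and it annihilates the terminal term $(\tilde y(T),p_k(T))_\H$ in \eqref{bilinA} for every subsequent choice of $\tilde y$.

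Next, inserting $\tilde y=-\partial_t p_k$ into $A(\tilde y,p_k)$, the first term of \eqref{bilinA} produces $\normL{\partial_t p_k}^2$, while the elliptic term produces $-\int_0^T a(\partial_t p_k,p_k)\,dt$. Writing $p_k$ on $I_m$ in terms of its nodes $\beta_{m-1},\beta_m$, one has $\restr{\partial_t p_k}{I_m}=(\beta_m-\beta_{m-1})/k_m$ and $\int_{I_m}p_k\,dt=\tfrac{k_m}{2}(\beta_{m-1}+\beta_m)$, so by symmetry of $a$, $\int_{I_m}a(\partial_t p_k,p_k)\,dt=\tfrac12\big(\norm{\nabla\beta_m}_\H^2-\norm{\nabla\beta_{m-1}}_\H^2\big)$. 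Summing over $m$ telescopes to $\tfrac12\big(\norm{\nabla\beta_M}_\H^2-\norm{\nabla\beta_0}_\H^2\big)=-\tfrac12\norm{\nabla p_k(0)}_\H^2$, using $\beta_M=0$, so the elliptic contribution equals $+\tfrac12\norm{\nabla p_k(0)}_\H^2$. Since $\h\in\L2$ gives $\langle\h,\partial_t p_k\rangle_{\Vd\V}=(\h,\partial_t p_k)_\H$, Cauchy-Schwarz and Young's inequality then yield
\[
\normL{\partial_t p_k}^2+\tfrac12\norm{\nabla p_k(0)}_\H^2=-\int_0^T(\h,\partial_t p_k)_\H\,dt\le\tfrac12\normL{\h}^2+\tfrac12\normL{\partial_t p_k}^2,
\]
and rearranging gives $\normL{\partial_t p_k}^2+\norm{\nabla p_k(0)}_\H^2\le\normL{\h}^2$.

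Finally I would convert the two resulting bounds into the asserted norms. Poincar\'e's inequality on $\V$ turns the seminorm estimate into $\norm{p_k(0)}_{H^1(\Omega)}\le C\normL{\h}$. For the remaining piece, $p_k(T)=0$ together with $p_k(t)=-\int_t^T\partial_t p_k(s)\,ds$ and Cauchy-Schwarz in time gives $\normL{p_k}\le T\normL{\partial_t p_k}\le T\normL{\h}$; adding the bound on $\normL{\partial_t p_k}$ controls $\norm{p_k}_{H^1(I,\H)}$, and combining with the $H^1(\Omega)$-bound on $p_k(0)$ proves the claim. There is no serious analytic obstacle here; the argument is a clean discrete energy estimate, and the only points needing care are verifying that $-\partial_t p_k$ truly lies in $Y_k$, establishing $p_k(T)=0$ so that the terminal term drops out, and the telescoping of the elliptic term, which relies on the symmetry of $a$ and the trapezoidal value of $\int_{I_m}p_k$.
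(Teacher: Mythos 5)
Your proof is correct and follows essentially the same route as the paper: test \eqref{E:AdjDiscr} with $\tilde y=-\partial_t p_k\in Y_k$, obtain the energy identity $\normL{\partial_t p_k}^2+\tfrac12\norm{\nabla p_k(0)}_{\H}^2=-\int_0^T(\h,\partial_t p_k)_{\H}\,dt$, and conclude via Cauchy--Schwarz, Young and Poincar\'e. The extra details you supply (deriving $p_k(T)=0$ by testing with $\chi_{\{T\}}\phi$, the telescoping of the elliptic term, and recovering $\normL{p_k}$ from $\normL{\partial_t p_k}$) are exactly the steps the paper leaves implicit in its reference to Lemma \ref{L:yDiscrStab}, and they are all valid.
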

\begin{proof}
We define $\tilde y\in Y_k$ by     
$\restr{\tilde y}{I_m}:=-\restr{\partial_t p_k}{I_m}$, $m=1,\dots,M$, 
with $\tilde y(T)\in\V$ arbitrary. Testing with $\tilde y$ in  
\eqref{E:AdjDiscr} we obtain
\[
A(-\partial_t p_k,p_k) = \frac 12\norm{\nabla p_k(0)}^2_{\H}
   + \normL{\partial_t p_k}^2
     \overset{\eqref{E:AdjDiscr}}{=} 
    \int\limits_0^T (h,-\partial_t p_k)_{\H}\, dt\,,
\]
where we have used $p_k(T)=0$. Arguing as in Lemma \ref{L:yDiscrStab}
delivers the desired result.
\end{proof}

Moreover, from \cite[Lemma 6.3]{MeidnerVexler2011} we have the following convergence results for discrete adjoint approximations.
\begin{Lemma}\label{L:AdjConvergence}
Let $p,p_k$ solve \eqref{E:Adj} and \eqref{E:AdjDiscr}, respectively, where $h\in \L2$. Then there holds
\[
\normL{p_k-p}\le C k^2\left(\normL{\partial_t^2 p} + \normL{\partial_t\Delta p} \right).
\]
\end{Lemma}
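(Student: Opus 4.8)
The plan is to run an Aubin--Nitsche duality argument that exploits the fact that the discrete adjoint scheme \eqref{E:AdjDiscr} is the exact transpose of the discrete state scheme \eqref{E:WFD}: both are governed by the same bilinear form $A$ of \eqref{bilinA}, with Ansatz and test spaces interchanged. Write $e:=p-p_k$. Since $p$ solves \eqref{E:Adj} with $p(T)=0$ and $p_k$ solves \eqref{E:AdjDiscr} (so that $p_k(T)=0$ as well), the first step is to record the Galerkin orthogonality $A(\tilde y,e)=0$ for all $\tilde y\in Y_k$. Here one checks that the continuous identity $A(\tilde y,p)=\int_0^T(h,\tilde y)_{\H}\,dt$ remains valid for piecewise constant $\tilde y\in Y_k$, because $p(T)=0$ annihilates the endpoint term in \eqref{bilinA} and the spatial form $a$ is symmetric; subtracting \eqref{E:AdjDiscr} then gives the orthogonality.

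Next I would introduce the continuous dual forward problem: let $w\in W(I)$ solve the state equation \eqref{E:State} with right-hand side $e$ and zero initial value, so that $A(w,v)=\int_0^T(e,v)_{\H}\,dt$ for all $v\in W(I)$. Testing with $v=e\in W(I)$ gives $\normL{e}^2=A(w,e)$, and since $\mathcal P_{Y_k}w\in Y_k$ the orthogonality yields $\normL{e}^2=A(w-\mathcal P_{Y_k}w,e)$. Expanding this with \eqref{bilinA}, the endpoint contribution drops because $e(T)=0$, leaving a time term $-\int_0^T(\partial_t e,\,w-\mathcal P_{Y_k}w)_{\H}\,dt$ and a spatial term $\int_0^T a(w-\mathcal P_{Y_k}w,e)\,dt$.

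The engine of the estimate is that $W:=w-\mathcal P_{Y_k}w$ has vanishing mean on every interval $I_m$, so any factor that is piecewise constant in time pairs to zero against $W$. In the time term the piecewise-constant $\partial_t p_k$ therefore disappears and I may replace $\partial_t p$ by $\partial_t p-\mathcal P_{Y_k}\partial_t p$; in the spatial term, after integrating by parts in space (legitimate since $W(t)\in\V$ and $e(t)\in H^2(\Omega)\cap\V$ by elliptic regularity of the Crank--Nicolson steps) to put $-\Delta$ on $e$, I may subtract interval means of $\Delta p$ and of $\Delta p_k$. Each such mean-deviation costs one factor $k$ by first-order interval-mean approximation, while $\normL{W}\le Ck\normL{\partial_t w}\le Ck\normL{e}$ by Lemma \ref{L:LowReg} applied to the dual problem supplies a second factor $k$ together with the crucial factor $\normL{e}$. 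Collecting the Cauchy--Schwarz bounds and dividing by $\normL{e}$ leads to
\[
\normL{e}\le Ck^2\left(\normL{\partial_t^2 p}+\normL{\partial_t\Delta p}+\normL{\partial_t\Delta p_k}\right).
\]

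The only term not yet of the desired form is $\normL{\partial_t\Delta p_k}$, arising from the piecewise-linear-in-time factor $\Delta p_k$, whose deviation from its interval mean is controlled by $k\,\normL{\partial_t\Delta p_k}$. This is the main obstacle: I need a uniform (in $k$) discrete higher-regularity estimate $\normL{\partial_t\Delta p_k}\le C(\normL{\partial_t^2 p}+\normL{\partial_t\Delta p})$, a discrete counterpart of Lemma \ref{L:HighReg} for the Crank--Nicolson adjoint solution. I would obtain it by differencing the nodal Crank--Nicolson relations defining $p_k$ and running the energy argument of Lemma \ref{L:AdjDiscrStab} on the differenced scheme (mirroring the proof of Lemma \ref{L:HighReg}, which differentiates the continuous equation in time), or by importing the corresponding stability lemma from \cite{MeidnerVexler2011}; once available, the claim follows. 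I note that the alternative, fully symmetric route which replaces $\mathcal P_{Y_k}w$ by the discrete dual state and invokes the supercloseness of Corollary \ref{C:Supercloseness} is formally cleaner, but the superconvergence term there carries no compensating factor $\normL{e}$ and the naive estimate only closes to order $k^{3/2}$; this is why I favour the projection-based version above.
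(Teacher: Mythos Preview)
The paper does not give its own proof of this lemma; it simply quotes \cite[Lemma~6.3]{MeidnerVexler2011}. So there is no in-paper argument to compare against, only the cited reference.

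Your Aubin--Nitsche duality set-up is correct: the Galerkin orthogonality $A(\tilde y,e)=0$ for $\tilde y\in Y_k$ holds because $p(T)=0=p_k(T)$, the representation $\normL{e}^2=A(w-\mathcal P_{Y_k}w,e)$ is valid, the endpoint term drops, and your treatment of the time term via mean-zero cancellation is clean and gives the correct $Ck^2\normL{\partial_t^2 p}\,\normL{e}$ contribution. Your $H^2$-regularity claim for $p_k$ is also right (it follows nodewise from the Crank--Nicolson recursion and elliptic regularity on the convex polygon), so the spatial integration by parts is legitimate.

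The genuine gap is exactly the one you flag: the bound $\normL{\partial_t\Delta p_k}\le C\big(\normL{\partial_t^2 p}+\normL{\partial_t\Delta p}\big)$. Your sketch (difference the nodal scheme, repeat the energy argument of Lemma~\ref{L:AdjDiscrStab}) is the right mechanism, but as stated it would naturally produce a bound in terms of $\norm{h}_{H^1(I,\H)}$ and $\norm{h(T)}_{H^1(\Omega)}$, i.e.\ the data of Lemma~\ref{L:HighReg}, not directly in terms of $\normL{\partial_t^2 p}+\normL{\partial_t\Delta p}$. You can recover $\normL{\partial_t h}\le\normL{\partial_t^2 p}+\normL{\partial_t\Delta p}$ from $h=-\partial_t p-\Delta p$, but the remaining pieces ($\normL{h}$ and $\norm{h(T)}_{H^1(\Omega)}=\norm{\partial_t p(T)}_{H^1(\Omega)}$) are not obviously dominated by that right-hand side. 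So either you need to sharpen the discrete stability argument to avoid those terms, or accept that the estimate you actually prove carries the slightly larger constant depending on the full $h$-regularity of Lemma~\ref{L:HighReg}. In the context of this paper that is harmless (the lemma is only applied with $h=\yopt-y_d$, where Lemma~\ref{L:HighReg} is invoked anyway), but it is a discrepancy with the lemma as stated. Your dismissal of the ``discrete dual state'' alternative is accurate: without higher regularity of the auxiliary $w$, supercloseness does not close the loop.
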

One essential ingredient of our convergence analysis is given by the following result.
\begin{Lemma}\label{L:ImprRate}
Let $y$ and $y_k$ as in Lemma \ref{L:Supercloseness}, and let $p_k(h)\in P_k$ 
denote the solution to \eqref{E:AdjDiscr} with right hand side $h$. Then 
there holds
\[ 
            \normL{p_k(y_k-y)}\le 
          C k^2 \left(\normL{\partial_t^2y}+\normL{\partial_t\Delta y}\right).
\]
\end{Lemma}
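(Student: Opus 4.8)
The quantity to estimate is $\normL{p_k(y_k-y)}$, where $p_k(\cdot)$ denotes the solution operator of the discrete adjoint problem \eqref{E:AdjDiscr}. My plan is to exploit the fact that $p_k(\cdot)$ is linear in its right-hand side, together with the superconvergence of $y_k$ toward a \emph{projection} of $y$ rather than toward $y$ itself. Specifically, I would split
\[
p_k(y_k-y) = p_k(y_k-\mathcal P_{Y_k}y) + p_k(\mathcal P_{Y_k}y - y),
\]
and treat the two summands by genuinely different mechanisms. The first term is small because its argument is small: by Corollary \ref{C:Supercloseness} the discrete source $y_k-\mathcal P_{Y_k}y$ already has $L^2(I,\H)$-norm of order $k^2$, so combining the stability estimate of Lemma \ref{L:AdjDiscrStab} (which bounds $\norm{p_k(h)}_{H^1(I,\H)}$, hence in particular $\normL{p_k(h)}$, by $C\normL{h}$) with the supercloseness bound immediately yields
\[
\normL{p_k(y_k-\mathcal P_{Y_k}y)} \le C\normL{y_k-\mathcal P_{Y_k}y}\le Ck^2\left(\normL{\partial_t^2y}+\normL{\partial_t\Delta y}\right).
\]

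The second term $p_k(\mathcal P_{Y_k}y - y)$ is the more delicate one, because the argument $\mathcal P_{Y_k}y-y$ is only of order $k$ in $L^2(I,\H)$, so the crude stability estimate alone is not enough. Here I would use that $\mathcal P_{Y_k}$ is precisely the operator of taking interval means, so $\mathcal P_{Y_k}y-y$ has \emph{zero mean} on every subinterval $I_m$. The test/trial structure of \eqref{E:AdjDiscr} pairs this source against functions in $Y_k$ (piecewise constants), but the relevant evaluation here is its pairing in $L^2(I,\H)$ against the discrete adjoint $p_k(h)\in P_k$, which is piecewise \emph{linear} and continuous. The key observation is that a piecewise linear function tested against a function with vanishing interval means picks up only the deviation of $p_k$ from its own interval mean, i.e. an extra factor of $k$ coming from $\restr{\partial_t p_k}{I_m}$. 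Concretely, on each $I_m$ I would write $\int_{I_m}(\mathcal P_{Y_k}y-y,p_k)_\H\,dt = \int_{I_m}(\mathcal P_{Y_k}y-y,p_k-\bar p_k^m)_\H\,dt$ where $\bar p_k^m$ is the interval mean of $p_k$, and bound $\norm{p_k-\bar p_k^m}_\H$ on $I_m$ by $Ck_m\norm{\partial_t p_k}_\H$. This gains the missing power of $k$: the product of the order-$k$ source and the order-$k$ oscillation of $p_k$ gives order $k^2$, while the surviving $\normL{\partial_t p_k}$ is controlled by $\normL{y_k-y}$ through Lemma \ref{L:AdjDiscrStab}.

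The main obstacle I anticipate is making the orthogonality argument for the second term rigorous while keeping the right-hand side in the desired form. The clean way is to introduce a duality/transposition: rather than estimating $p_k(\mathcal P_{Y_k}y-y)$ directly, I would test the adjoint equation \eqref{E:AdjDiscr} with a well-chosen $\tilde y\in Y_k$, or equivalently interpret $\normL{p_k(\cdot)}$ through a dual problem, so that the vanishing-mean property of $\mathcal P_{Y_k}y-y$ against a piecewise-constant test function can be invoked exactly and the residual reduces to an interpolation-error term of order $k^2$ governed by $\normL{\partial_t^2 y}$ and $\normL{\partial_t\Delta y}$. One must also verify that the endpoint/damping-step irregularities of the Crank--Nicolson scheme (the special first interval for $y_k$ and terminal condition for $p_k$) do not spoil the per-interval estimates; these are handled exactly as in the cited results of \cite{MeidnerVexler2011}, and since the initial data is matched between the continuous and discrete problems, no additional boundary contributions arise. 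Collecting the $\mathcal O(k^2)$ bounds from both summands via the triangle inequality gives the assertion.
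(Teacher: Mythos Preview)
Your splitting and your treatment of the first term $p_k(y_k-\mathcal P_{Y_k}y)$ are exactly what the paper does: stability from Lemma~\ref{L:AdjDiscrStab} combined with Corollary~\ref{C:Supercloseness} gives the $Ck^2$ bound immediately.

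Where you diverge is in the second term $p_k(\mathcal P_{Y_k}y-y)$, and here you are working much harder than necessary. You correctly identify that $\mathcal P_{Y_k}y-y$ has vanishing mean on each $I_m$, but you then set up an argument pairing it against the \emph{piecewise linear} function $p_k$ to extract an extra factor of $k$. That is the wrong pairing to look at. In the defining equation \eqref{E:AdjDiscr} for $p_k(h)$, the source $h$ enters only through $\int_0^T (h,\tilde y)_{\H}\,dt$ with test functions $\tilde y\in Y_k$, i.e.\ \emph{piecewise constants}. Since $\mathcal P_{Y_k}$ is precisely the $L^2(I,\H)$-orthogonal projection onto $Y_k$, the right-hand side of \eqref{E:AdjDiscr} with $h=\mathcal P_{Y_k}y-y$ vanishes for every $\tilde y\in Y_k$, and uniqueness forces $p_k(\mathcal P_{Y_k}y-y)=0$ identically. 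Equivalently, $p_k(y_k-y)=p_k(y_k-\mathcal P_{Y_k}y)$.

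So the paper's proof is one line: replace $y$ by $\mathcal P_{Y_k}y$ in the source (Galerkin orthogonality), then apply stability and supercloseness. Your duality machinery, the per-interval oscillation estimate for $p_k$, and the worries about endpoint contributions are all unnecessary; the term you call ``more delicate'' is in fact zero.
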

\begin{proof}
The function $p_k(y_k-y)$ solves \eqref{E:AdjDiscr} with $p_k(T)=0$ and $h=y_k-y$. Since the test functions $\tilde y$ are elements of $Y_k$ we by Galerkin orthogonality obtain the same solution $p_k$ with right hand side $h=y_k-\mathcal P_{Y_k}y$, i.e. $p_k(y_k-y)=p_k(y_k-\mathcal P_{Y_k}y)$. Hence by Lemma \ref{L:AdjDiscrStab} and Corollary \ref{C:Supercloseness} we obtain
  \[
  \normL{p_k(y_k-y)}=\normL{p_k(y_k-\mathcal P_{Y_k}y)}\le C\normL{y_k-\mathcal P_{Y_k}y}\le C k^2 \left(\normL{\partial_t^2y}+\normL{\partial_t\Delta y}\right),
\]
which is the claim.
\end{proof}

  \section{Variational discretization of the optimal control problem \eqref{OCP}}

To approximate the optimal control problem \eqref{OCP} we apply variational discretization of \cite{Hinze2005} w.r.t. time, where the Petrov Galerkin state discretization introduced in the previous section is applied, i.e. we consider the optimal control problem
  \begin{equation}\label{OCPDiscr}\tag{$\mathbb P_k$}
\begin{aligned}
&\min_{y_k\in Y_k,u\in \Uad} J(y_k,u)=\frac{1}{2}\|y_k-y_d\|^2_\L2+\frac{\alpha}{2}\|u\|^2_U,\\
&\text{s.t. } y_k=S_k(Bu,y_0),
\end{aligned}
\end{equation}
where $S_k$ is the solution operator associated to \eqref{E:WFD}. This problem admits a unique solution $(\bar y_k,\bar u_k) \in Y_k\times \Uad$, where $\bar y_k=S_k(B\bar u_k,y_0)$. The first order necessary and sufficient optimality condition for problem \eqref{OCPDiscr} reads
 \begin{equation}\label{FONCDiscr}
\bar u_k= P_\Uad\left(-\frac{1}{\alpha}B'\bar p_k\right),
 \end{equation}
where $\bar p_k \in P_k$ denotes the discrete adjoint variable, which is the unique solution to
\eqref{E:AdjDiscr} with $h:=\yoptk-y_d$.
Equation \eqref{FONCDiscr} is amenable to numerical treatment although the controls are not discretized explicitly, see \cite{Hinze2005}. It is possible to implement a globalized semismooth Newton strategy in order to solve \eqref{FONCDiscr} numerically, see \cite{HinzeVierling2012}.

First let us establish an error estimate that resembles the standard estimate for variationally discretized problems. To begin with we for $v\in U$ set $y(v):=S(Bv,y_0)$ and denote with $y_k(v)$ the solution to \eqref{E:WFD} with $f:=Bv$. Furthermore, we for $h \in L^2(I,\Vd)$ denote with $p_k(h)$ the solution to \eqref{E:AdjDiscr}.
\begin{Lemma}\label{L:ConvEst}
  Let $\uopt$ and $\uoptk$ solve \eqref{OCP} and \eqref{OCPDiscr}, respectively. Then there holds
  \[
\alpha  \normRD{\uoptk-\uopt}^2\le\left( B'\Big( p_k(\yopt-y_d)-\popt + p_k(y_k(\uopt))-p_k(\yopt) \Big),\uopt-\uoptk\right)_{L^2(\I,\R^D)}.
  \]
  \end{Lemma}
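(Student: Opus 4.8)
The plan is to follow the classical route for variationally discretized control problems, exploiting that the projection formulas \eqref{FONC} and \eqref{FONCDiscr} are equivalent to variational inequalities. Since $P_\Uad$ is the metric projection onto the closed convex set $\Uad$, I would first rephrase \eqref{FONC} and \eqref{FONCDiscr} as
\[
(\alpha\uopt + B'\popt,\, u-\uopt)_{L^2(\I,\R^D)} \ge 0, \qquad
(\alpha\uoptk + B'\poptk,\, u-\uoptk)_{L^2(\I,\R^D)} \ge 0, \quad \forall\, u\in\Uad.
\]
Testing the first inequality with the admissible control $u=\uoptk$ and the second with $u=\uopt$, adding, and rearranging then yields the standard starting estimate
\[
\alpha\normRD{\uoptk-\uopt}^2 \le \big(B'(\poptk-\popt),\, \uopt-\uoptk\big)_{L^2(\I,\R^D)}.
\]

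Next I would decompose $\poptk-\popt$ using the linearity of the discrete adjoint map $h\mapsto p_k(h)$ from \eqref{E:AdjDiscr} and of the discrete state map from \eqref{E:WFD}, noting that the common initial datum $y_0$ cancels in all state differences. With $\poptk=p_k(\yoptk-y_d)$ and $\yoptk-\yopt = \big(y_k(\uoptk)-y_k(\uopt)\big) + \big(y_k(\uopt)-\yopt\big)$, together with $p_k(y_k(\uopt))-p_k(\yopt)=p_k(y_k(\uopt)-\yopt)$, this gives
\[
\poptk-\popt = \underbrace{p_k(\yopt-y_d)-\popt + p_k(y_k(\uopt))-p_k(\yopt)}_{\text{the argument multiplying }B'\text{ in the claim}} + p_k\big(y_k(\uoptk)-y_k(\uopt)\big).
\]
Substituting into the starting estimate, the claim follows once I establish that the extra contribution is non-positive, i.e.
\[
\big(B'p_k\big(y_k(\uoptk)-y_k(\uopt)\big),\, \uopt-\uoptk\big)_{L^2(\I,\R^D)} \le 0.
\]

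The crux of the proof — and the step I expect to be the main obstacle — is precisely this sign, where the compatibility of the Petrov--Galerkin ansatz and test spaces enters. Setting $w:=\uoptk-\uopt$, I would let $z_k\in Y_k$ solve \eqref{E:WFD} with $f=Bw$ and vanishing initial datum, so that $z_k=y_k(\uoptk)-y_k(\uopt)$ since the $y_0$-contributions cancel, and put $q_k:=p_k(z_k)\in P_k$. The key observation is that $q_k\in P_k$ is an admissible test function for the discrete state equation \eqref{E:WFD}, while $z_k\in Y_k$ is an admissible test function for the discrete adjoint equation \eqref{E:AdjDiscr}, so the value $A(z_k,q_k)$ is reachable from both. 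Testing \eqref{E:WFD} by $q_k$ gives $A(z_k,q_k)=\int_0^T\langle Bw,q_k\rangle_{\Vd\V}\,dt = (B'q_k,w)_{L^2(\I,\R^D)}$, whereas testing \eqref{E:AdjDiscr} by $z_k$ gives $A(z_k,q_k)=\normL{z_k}^2$. Hence $\big(B'p_k(z_k),w\big)_{L^2(\I,\R^D)}=\normL{z_k}^2\ge 0$, and since $\uopt-\uoptk=-w$ the extra term equals $-\normL{z_k}^2\le 0$. Dropping this non-positive term then produces exactly the asserted inequality.
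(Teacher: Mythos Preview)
Your proof is correct and follows essentially the same route as the paper: rewrite the optimality conditions as variational inequalities, test crosswise and add, decompose $\poptk-\popt$ by linearity, and drop the remaining term after identifying it as $-\normL{\bar y_k-y_k(\uopt)}^2$. The paper states this last identity directly (``after some simple manipulations''), whereas you spell out the underlying duality computation $A(z_k,q_k)=(B'q_k,w)=\normL{z_k}^2$ via the compatible Petrov--Galerkin test spaces; this is exactly the mechanism behind the paper's identification.
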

  \begin{proof}
We note that \eqref{FONC} and \eqref{FONCDiscr} can be equivalently expressed as
\begin{align}
      (\alpha \uopt + B'\popt,\uopt-u)_{L^2(\I,\R^D)}
           \le 0 \quad\forall\ u\in \Uad\,,\tag{\ref{FONC}'}\label{E:FONC2}\\
      (\alpha \uoptk + B' \bar p_k,\uoptk-u)_{L^2(\I,\R^D)}
           \le 0 \quad\forall\ u\in \Uad\,.\tag{\ref{FONCDiscr}'}\label{FONCDiscr2}
\end{align}
  Now inserting $\uoptk$ into \eqref{E:FONC2} and $\uopt$ into \eqref{FONCDiscr2} and adding the resulting inequalities yields
\[
  \left(\alpha (\uoptk-\uopt) + B'\Big(\bar p_k-\popt\Big),
         \uoptk-\uopt\right)_{L^2(\I,\R^D)}\le 0.
\]
After some simple manipulations we obtain
\[
\begin{aligned}
\alpha  \normRD{\uoptk-\uopt}^2
  \le&\left( B'\Big(p_k(\yopt-y_d)-\popt+ p_k(y_k(\uopt))-p_k(\yopt)\Big),
             \uopt-\uoptk\right)_{L^2(\I,\R^D)}\\
     &\underbrace{+\left( B'\Big(\bar p_k- p_k(y_k(\uopt)-y_d)\Big),
             \uopt-\uoptk\right)_{L^2(\I,\R^D)}}
          _{-\normL{\bar y_k-y_k(\uopt)}^2} \\ 
  \le&\left( B'\Big(p_k(\yopt-y_d)-\popt+ p_k(y_k(\uopt))-p_k(\yopt)\Big),
             \uopt-\uoptk\right)_{L^2(\I,\R^D)},
\end{aligned}
\]
which is the desired  estimate. \end{proof}
We are now in the position to formulate our main result.
\begin{Theorem}\label{T:main}
Let $\uopt$ and $\uoptk$ denote the solutions to \eqref{OCP} and \eqref{OCPDiscr},
 respectively. Then 
\begin{multline}\label{E:controlestimate}
   \alpha  \normRD{\uoptk-\uopt}\le 
      Ck^2\Big( \norm{\uopt}_{H^1(\I,\R^D)}+\norm{\uopt(0)}_{\mathbb R^D} \\
   +\norm{y_d}_{H^1(\I,\H)}+\norm{y_d(T)}_{H^1(\Omega)}+\norm{y_0}_{H^1(\Omega)}
         +\norm{\Delta y_0}_{H^1(\Omega)}  \Big)
\end{multline}
is satisfied.
\end{Theorem}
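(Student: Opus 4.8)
The plan is to start from the variational estimate of \Lemmaref{L:ConvEst}, turn its quadratic inequality into a linear bound on $\normRD{\uoptk-\uopt}$, identify the two error contributions on the right-hand side as a pure adjoint discretization error and as the ``supercloseness-transported'' error already controlled by \Lemmaref{L:ImprRate}, and finally trade the resulting regularity seminorms of $\yopt$ and $\popt$ for the data norms in \eqref{E:controlestimate} via \Lemmaref{L:LowReg} and \Lemmaref{L:HighReg}. First I would apply Cauchy--Schwarz in $L^2(\I,\R^D)$ to the bound of \Lemmaref{L:ConvEst} and divide by $\normRD{\uopt-\uoptk}$ (the vanishing case being trivial). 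Since the functionals $g_i$ defining $B$ satisfy $g_i\in\V\hookrightarrow\H$, the adjoint $B'$ is bounded from $L^2(\I,\H)$ into $L^2(\I,\R^D)$, i.e.\ $\|B'q\|_{L^2(\I,\R^D)}\le C\normL{q}$. Together with the linearity $p_k(y_k(\uopt))-p_k(\yopt)=p_k(y_k(\uopt)-\yopt)$ and the triangle inequality this reduces the claim to
\[
\alpha\,\normRD{\uoptk-\uopt}\le C\,\normL{ p_k(\yopt-y_d)-\popt } + C\,\normL{ p_k\big(y_k(\uopt)-\yopt\big) }.
\]

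Next I would treat the two terms separately. The first is exactly the discretization error of the adjoint equation with right-hand side $h=\yopt-y_d$: here $\popt$ solves \eqref{E:Adj} and $p_k(\yopt-y_d)$ solves \eqref{E:AdjDiscr} with the same data, so \Lemmaref{L:AdjConvergence} bounds it by $Ck^2\big(\normL{\partial_t^2\popt}+\normL{\partial_t\Delta\popt}\big)$. The second term is precisely the object estimated in \Lemmaref{L:ImprRate}, with $y=\yopt$ and $y_k=y_k(\uopt)$ (both generated by the data $(B\uopt,y_0)$), giving a bound by $Ck^2\big(\normL{\partial_t^2\yopt}+\normL{\partial_t\Delta\yopt}\big)$. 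One checks that $(B\uopt,y_0)$ meets the hypotheses of \Lemmaref{L:HighReg}: this follows from \Assumptionref{A:Regularity} ($g_i\in\V$, $y_0\in\V$ with $\Delta y_0\in\V$) together with the regularity $\uopt\in W^{1,\infty}(\I,\R^D)$ established in Section~3.

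Finally I would convert the four regularity seminorms into the data norms of \eqref{E:controlestimate}. \Lemmaref{L:HighReg} applied to the state with $(f,\kappa)=(B\uopt,y_0)$ bounds $\normL{\partial_t^2\yopt}+\normL{\partial_t\Delta\yopt}$ by $\|B\uopt\|_{H^1(\I,\H)}+\|(B\uopt)(0)\|_{H^1(\Omega)}+\|y_0\|_{H^1(\Omega)}+\|\Delta y_0\|_{H^1(\Omega)}$, and since $g_i\in\V$ we have $\|B\uopt\|_{H^1(\I,\H)}\le C\|\uopt\|_{H^1(\I,\R^D)}$ and $\|(B\uopt)(0)\|_{H^1(\Omega)}\le C\|\uopt(0)\|_{\R^D}$. \Lemmaref{L:HighReg} applied to the adjoint with $h=\yopt-y_d$ bounds $\normL{\partial_t^2\popt}+\normL{\partial_t\Delta\popt}$ by $\|\yopt-y_d\|_{H^1(\I,\H)}+\|(\yopt-y_d)(T)\|_{H^1(\Omega)}$; the $y_d$-parts already sit among the listed data norms, while $\|\yopt\|_{H^1(\I,\H)}$ and $\|\yopt(T)\|_{H^1(\Omega)}$ are controlled through \Lemmaref{L:LowReg} by $\normL{B\uopt}+\|y_0\|_{H^1(\Omega)}\le C\big(\|\uopt\|_{H^1(\I,\R^D)}+\|y_0\|_{H^1(\Omega)}\big)$. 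Collecting terms gives \eqref{E:controlestimate}.

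The conceptually decisive step is the second one. The \emph{a priori} temptation would be to estimate the mixed term $p_k(y_k(\uopt))-p_k(\yopt)$ crudely through the $\mathcal O(k)$ state error, which would be fatal; the point is to read it as the adjoint of a state discretization error, so that Galerkin orthogonality against $Y_k$ lets one replace $y_k(\uopt)-\yopt$ by the superclose quantity $y_k(\uopt)-\mathcal P_{Y_k}\yopt$ and recover the extra order (this is exactly what \Lemmaref{L:ImprRate} encapsulates). The remaining work is essentially bookkeeping with the regularity estimates, the only care being to match each produced data norm against those in \eqref{E:controlestimate}.
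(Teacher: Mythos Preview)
Your proposal is correct and follows essentially the same route as the paper: apply Cauchy--Schwarz to the estimate of \Lemmaref{L:ConvEst}, bound the two resulting terms by \Lemmaref{L:AdjConvergence} and \Lemmaref{L:ImprRate} respectively, and then reduce the regularity seminorms of $\yopt$ and $\popt$ to the data norms via \Lemmaref{L:LowReg} and \Lemmaref{L:HighReg}. Your write-up is in fact more explicit than the paper's in the final bookkeeping step (the paper simply invokes the two regularity lemmata without spelling out the chain), and your closing remark correctly identifies \Lemmaref{L:ImprRate} as the place where the extra order is recovered.
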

\begin{proof}
Making use of the continuity of $B$ and $B'$, compare \eqref{E:B} and
 \eqref{E:Badj}, we directly infer from Lemma \ref{L:ConvEst}
\[
\begin{aligned}
\alpha  \normRD{\uoptk-\uopt}&\le C\left(\normL{p_k(\yopt-y_d)-\popt}+\normL{p_k(y_k(\uopt))-p_k(\yopt)}\right)\\
&\le Ck^2\left( \normL{\partial_t^2\popt}+\normL{\partial_t\Delta \popt}+\normL{\partial_t^2\yopt}+\normL{\partial_t\Delta \yopt} \right).
\end{aligned}
\]
The last estimate follows from the Lemmata \ref{L:AdjConvergence} and \ref{L:ImprRate}. The claim is now a direct consequence of the Lemmata \ref{L:LowReg} and \ref{L:HighReg}.
\end{proof}
Finally we prove second order convergence for $\pi_{P_k^*}\bar y_k$, where we note that this function is obtained for free from $\bar y_k$, since $\bar y_k$ only has to be evaluated on the dual time grid.
\begin{Theorem}\label{T:2ndorderstate} 
Let $\uopt$ and $\uoptk$ denote the solutions to \eqref{OCP} and \eqref{OCPDiscr},
 respectively.  Then there holds
\begin{multline}\label{E:state convergence}
\normL{\bar y-\pi_{P_k^*}\bar y_k} \le
   Ck^2\Big( \normRD{a} + \norm{\uopt}_{H^1(\I,\R^D)}
           +\norm{\uopt(0)}_{\mathbb R^D} \\
   +\norm{y_d}_{H^1(\I,\H)} + \norm{y_d(T)}_{H^1(\Omega)} + 
      \norm{y_0}_{H^1(\Omega)} + \norm{\Delta y_0}_{H^1(\Omega)} \Big).
\end{multline}
\end{Theorem}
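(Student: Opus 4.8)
The plan is to insert the intermediate function $y_k(\uopt)\in Y_k$, i.e. the fully discrete state driven by the \emph{continuous} optimal control $\uopt$, and to estimate by the triangle inequality
\[
\normL{\yopt-\pi_{P_k^*}\yoptk}\le \normL{\yopt-\pi_{P_k^*}y_k(\uopt)}+\normL{\pi_{P_k^*}\big(y_k(\uopt)-\yoptk\big)},
\]
where in the second summand I used that $y_k(\uopt),\yoptk\in Y_k$ together with the linearity of $\pi_{P_k^*}$ on $Y_k$. The two terms are then treated by genuinely different arguments: the first by the superconvergence of the projected discrete state, the second by stability of the projection combined with the control error bound of Theorem \ref{T:main}.

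First I would treat $\normL{\yopt-\pi_{P_k^*}y_k(\uopt)}$. The decisive observation is that $\yopt=S(B\uopt,y_0)$ and $y_k(\uopt)=S_k(B\uopt,y_0)$ solve \eqref{E:WF} and \eqref{E:WFD} with \emph{the same} data $(B\uopt,y_0)$. Since $\uopt\in W^{1,\infty}(\I,\R^D)$ and $g_i\in\V$ yield $B\uopt\in H^1(\I,\H)$ with $(B\uopt)(0)\in\V$, and since $y_0\in\V$ with $\Delta y_0\in\V$ by Assumption \ref{A:Regularity}, the data meet the regularity hypotheses of Lemma \ref{L:HighReg}, hence of Lemma \ref{L:Supercloseness}. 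Therefore Lemma \ref{L:convyproj} applies verbatim and gives
\[
\normL{\yopt-\pi_{P_k^*}y_k(\uopt)}\le Ck^2\big(\normL{\partial_t^2\yopt}+\normL{\partial_t\Delta \yopt}\big).
\]

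Next I would handle $\normL{\pi_{P_k^*}(y_k(\uopt)-\yoptk)}$. Since both arguments lie in $Y_k$, part (2) of Lemma \ref{L:yprojprop} removes the projection, $\normL{\pi_{P_k^*}(y_k(\uopt)-\yoptk)}\le C\normL{y_k(\uopt)-\yoptk}$. By the linearity of $S_k$ one has $y_k(\uopt)-\yoptk=S_k(B(\uopt-\uoptk),0)$, so the discrete stability estimate of Lemma \ref{L:yDiscrStab} together with the boundedness of $B$ delivers $\normL{y_k(\uopt)-\yoptk}\le C\normL{B(\uopt-\uoptk)}\le C\normRD{\uopt-\uoptk}$. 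At this point Theorem \ref{T:main} bounds $\normRD{\uopt-\uoptk}$ by $Ck^2$ times the data-dependent quantity appearing on its right-hand side, so that this summand is already of the desired form.

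Finally I would collect the constants. For the first summand, Lemma \ref{L:HighReg} applied with data $(B\uopt,y_0)$ converts $\normL{\partial_t^2\yopt}+\normL{\partial_t\Delta \yopt}$ into the control and data norms via $\|Bv\|_{H^1(\I,\H)}\le C\norm{v}_{H^1(\I,\R^D)}$ and $\|(Bv)(0)\|_{H^1(\Omega)}\le C\norm{v(0)}_{\R^D}$, producing $\norm{\uopt}_{H^1(\I,\R^D)}$, $\norm{\uopt(0)}_{\R^D}$, $\norm{y_0}_{H^1(\Omega)}$ and $\norm{\Delta y_0}_{H^1(\Omega)}$; the zeroth-order contribution of the box-constrained control $\uopt\in\Uad$, estimated elementarily, accounts for the term $\normRD{a}$. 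For the second summand the right-hand side of Theorem \ref{T:main} supplies the remaining norms of $\uopt$, $y_d$ and $y_0$. Adding the two bounds yields \eqref{E:state convergence}. I expect the only delicate point to be the verification that $(B\uopt,y_0)$ genuinely satisfies the hypotheses needed to invoke Lemma \ref{L:convyproj} for $\yopt$; once this regularity is in hand, the remainder is a routine combination of the stability and superconvergence results already established.
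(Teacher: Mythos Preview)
Your proof is correct, but it proceeds via a different splitting than the paper's. The paper inserts the \emph{continuous} state at the \emph{discrete} control, $y(\uoptk)=S(B\uoptk,y_0)$, and writes
\[
\normL{\yopt-\pi_{P_k^*}\yoptk}\le \normL{\yopt-y(\uoptk)}+\normL{y(\uoptk)-\pi_{P_k^*}\yoptk}.
\]
The first term is handled by Lipschitz continuity of $S$ together with Theorem \ref{T:main}; the second by Lemma \ref{L:convyproj} applied to the pair $y(\uoptk),\yoptk$, which forces the authors to bound $\norm{\uoptk}_{H^1(\I,\R^D)}+\norm{\uoptk(0)}_{\R^D}$ uniformly in $k$. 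That uniform bound is obtained from \eqref{FONCDiscr} via Lemma \ref{L:AdjDiscrStab} and Lemma \ref{L:yDiscrStab}, and it is precisely here that the term $\normRD{a}$ enters the estimate (through $\norm{P_{\Uad}(\cdot)}_{L^2}$).

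Your route swaps the roles: you insert the \emph{discrete} state at the \emph{continuous} control, $y_k(\uopt)$, so that Lemma \ref{L:convyproj} is applied to $\yopt$, whose required regularity is already available from Section~3 without any further work. The remaining term is disposed of by projection stability (Lemma \ref{L:yprojprop}(2)), discrete stability (Lemma \ref{L:yDiscrStab}), and Theorem \ref{T:main}. This is shorter and avoids the uniform $H^1$-bound on $\uoptk$ altogether; in particular, your argument actually proves \eqref{E:state convergence} \emph{without} the $\normRD{a}$ term. Your sentence attributing $\normRD{a}$ to ``the zeroth-order contribution of the box-constrained control $\uopt$'' is therefore not quite right---in your decomposition that term simply never arises---but since it appears only as an additional non-negative quantity on the right-hand side, the stated inequality follows a fortiori.
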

\begin{proof}
We have
\[
    \normL{\bar y-\pi_{P_k^*}\bar y_k} \le 
       \normL{\bar y - y(\bar u_k)} + \normL{y(\bar u_k)-\pi_{P_k^*}\bar y_k}.
\]
Lipschitz continuity combined with \eqref{E:controlestimate} yields
\begin{multline*}
\normL{\bar y - y(\bar u_k)} \le 
      Ck^2\Big( \norm{\uopt}_{H^1(\I,\R^D)}+\norm{\uopt(0)}_{\mathbb R^D} \\
   +\norm{y_d}_{H^1(\I,\H)}+\norm{y_d(T)}_{H^1(\Omega)}+\norm{y_0}_{H^1(\Omega)}
         +\norm{\Delta y_0}_{H^1(\Omega)}  \Big).
\end{multline*}
For the second addend we have from Lemma \ref{L:convyproj} combined with
 Lemma \ref{L:HighReg}
\begin{multline*}
\normL{y(\bar u_k)-\pi_{P_k^*}\bar y_k} 
   \le C k^2 \left( \normL{\partial_t^2 y(\bar u_k)} + 
      \normL{\partial_t \Delta y(\bar u_k)}\right) \\ 
   \le C k^2 \left\{ \norm{\bar u_k}_{H^1(\I,\mathbb{R}^D))}+
        \norm{\bar u_k(0)}_{\mathbb R^D}+\norm{y_0}_{H^1(\Omega)}+
       \norm{\Delta y_0}_{H^1(\Omega)}\right\}.
\end{multline*}
Using \eqref{E:Badj} we with the help of \eqref{FONCDiscr}, Lipschitz continuity of the orthogonal projection,
 Lemma \ref{L:AdjDiscrStab}, Lemma \ref{L:yDiscrStab}, 
and \eqref{E:controlestimate} estimate
\begin{equation*}
\begin{aligned}
\norm{\bar u_k}_{H^1(\I,\mathbb{R}^D))}+\norm{\bar u_k(0)}_{\mathbb{R}^D}
  &\le C \left\{\normRD{a} + \norm{\bar p_k}_{H^1(\I,L^2(\Omega))}+
         \norm{\bar p_k(0)}_{H^1(\Omega)}\right\} \\
  &\le C \left\{ \normRD{a} + \normL{\bar y_k-y_d}\right\}\\
  &\le C \left\{ \normRD{a} + 
               \normRD{\uoptk} + \norm{y_0}_{\H} + \normL{y_d}\right\}\\
  &\le C \left\{ \normRD{a} + 
       \norm{\uopt}_{H^1(\I,\R^D)}+\norm{\uopt(0)}_{\mathbb R^D}\right.  \\
  &\left.\quad\quad\quad\quad +\norm{y_d}_{H^1(\I,\H)}+\norm{y_d(T)}_{H^1(\Omega)} \right\}.
\end{aligned}
\end{equation*}
Collecting all estimates gives the desired result.
\end{proof}

\section{Numerical examples}
We now construct numerical examples that validate our main result, i.e. Theorem \ref{T:main}.\\
In both examples we make use of the fact that instead of the linear control operator $B$, given by \eqref{E:B}, we can also use an \emph{affine linear} control operator
\begin{equation}\label{E:Btilde}
\tilde B: U\rightarrow L^2(I,\Vd)\,,\quad u\mapsto g_0 + Bu.
\end{equation}
If we assume that $g_0$ is an element of $H^1(I,L^2(\Omega))$ with initial value $g_0(0)\in\V$, all the preceding theory remains valid.

\subsection{First example}
The first example is taken from \cite{MeidnerVexler2011}. We recall it for convenience in our notation.

Given a space-time domain $\Omega\times I = (0,1)^2 \times (0,0.1)$, i.e. $D=1$, we consider first the control operator $\tilde B$, which is fully characterized by means of the two functions
\[
      g_1(x_1,x_2) := \sin(\pi x_1)\sin(\pi x_2)\,,
\]
\[
      g_0(t,x_1,x_2) := -\pi^4 w_a(t,x_1,x_2) -BP_\Uad\left(
      -\frac{1}{4\alpha}(\exp(a\pi^2 t)-\exp(a\pi^2 T))\right)\,,
\]
where
\[
      w_a(t,x_1,x_2) := \exp(a\pi^2 t)\,\sin(\pi x_1)\sin(\pi x_2)\,,\quad a\in\R\,,
\]
denote eigenfunctions of $\pm \partial_t - \Delta$. As a consequence we have
\[
      (B'z)(t) = \int_\Omega z(t,x_1,x_2)\cdot g_1(x_1,x_2)\, dx_1 dx_2\,,
\]
compare \eqref{E:Badj}. Note that we consider the adjoint of $B$, not of $\tilde B$.
Furthermore we take
\[
      y_d(t,x_1,x_2) := \frac{a^2-5}{2+a}\pi^2 w_a(t,x_1,x_2) + 2\pi^2 w_a(T,x_1,x_2)\,,
\]
and
\[
      y_0(x_1,x_2) := \frac{-1}{2+a}\pi^2 w_a(0,x_1,x_2)\,.
\]
The admissible set $\Uad$ is defined by the bounds $a_1:=-25$ and $b_1:=-1$. Furthermore $\alpha:=\pi^{-4}$ and $a:=-\sqrt{5}$.

The exact solution of the optimal control problem \eqref{OCP} is given by
\[
      \uopt(t) = P_\Uad\left(
      -\frac{1}{4\alpha}(\exp(a\pi^2 t)-\exp(a\pi^2 T))\right)\,,
\]
\[
      \yopt(t,x_1,x_2) = \frac{-1}{2+a}\pi^2 w_a(t,x_1,x_2)\,,
\]
and
\[
      \popt(t,x_1,x_2) = w_a(t,x_1,x_2)-w_a(T,x_1,x_2)\,.
\]
Note that this example fulfills the Assumption \ref{A:Regularity}.

We solve this problem numerically using a fixpoint iteration on equation \eqref{FONCDiscr}.
We discretize in space with a fixed number of nodes $\text{Nh}=(2^7+1)^2=16\,641$.
We examine the behavior of the temporal convergence by considering a sequence of meshes with $\text{Nk}=(2^\ell+1)^2$ nodes at refinement levels $\ell=1,2,3,4,5,6$. Each fixpoint iteration is initialized by the starting value $u_{kh}:=a_1$.
As a stopping criterion we require
\[
    \| B'\left(p_{kh}^\text{new} - p_{kh}^\text{old}\right) \|_{L^\infty(\Omega\times I)} < t_0\,,
\]
where $t_0:=10^{-5}$ is a prescribed threshold.
\begin{table}[hbt]
\begin{center}
\begin{tabular}{ccccccc}
\hline
$\ell$ & $\|\uopt-u_{kh}\|_{L^1(I,L^1(\Omega))}$ & $\|\uopt-u_{kh}\|_{L^2(L^2)}$ & $\|\uopt-u_{kh}\|_{L^\infty(L^\infty)}$ &
 $\text{EOC}_{L^1}$& $\text{EOC}_{L^2}$ & $\text{EOC}_{L^\infty}$\\
\hline
 1 &  0.07338346 &  0.31701554 &  1.97701729 &    /  &    /   &   / \\
 2 &  0.01653824 &  0.08052755 &  0.66237792 &  2.15 &  1.98  & 1.58\\
 3 &  0.00396507 &  0.01977927 &  0.19440662 &  2.06 &  2.03  & 1.77\\
 4 &  0.00088306 &  0.00448012 &  0.05014900 &  2.17 &  2.14  & 1.95\\
 5 &  0.00017870 &  0.00083749 &  0.00970228 &  2.31 &  2.42  & 2.37\\
 6 &  0.00018581 &  0.00068442 &  0.00462541 &  -0.06&   0.29 &  1.07\\
\hline
\end{tabular}
\end{center}
\caption{First example: Errors and EOC in the control.}
\label{tab:ex1u}
\end{table}
\begin{table}[hbt]
\begin{center}
\begin{tabular}{ccccccc}
\hline
$\ell$ & $\|\yopt-y_{kh}\|_{L^1(I,L^1(\Omega))}$ & $\|\yopt-y_{kh}\|_{L^2(L^2)}$ & $\|\yopt-y_{kh}\|_{L^\infty(L^\infty)}$ &
 $\text{EOC}_{L^1}$& $\text{EOC}_{L^2}$& $\text{EOC}_{L^\infty}$\\
\hline
 1 &  0.19002993 &  0.96055898 &  14.78668742&     /  &    / &     /\\
 2 &  0.09429883 &  0.49287844 &  9.02297459 &  1.01  & 0.96 &  0.71\\
 3 &  0.04706727 &  0.24798983 &  5.06528533 &  1.00  & 0.99 &  0.83\\
 4 &  0.02352485 &  0.12419027 &  2.69722511 &  1.00  & 1.00 &  0.91\\
 5 &  0.01176627 &  0.06216408 &  1.39374255 &  1.00  & 1.00 &  0.95\\
 6 &  0.00588802 &  0.03119134 &  0.70870727 &  1.00  & 0.99 &  0.98\\
\hline
\end{tabular}
\end{center}
\caption{First example: Errors and EOC in the state.}
\label{tab:ex1y}
\end{table}
\begin{table}[hbt]
\begin{center}
\begin{tabular}{ccccccc}
\hline
$\ell$ & $\|\yopt-\pi_{P_k^*} y_{kh}\|_{L^1(I,L^1(\Omega))}$ & $\|\dots\|_{L^2(L^2)}$ & $\|\dots\|_{L^\infty(L^\infty)}$ &
 $\text{EOC}_{L^1}$& $\text{EOC}_{L^2}$& $\text{EOC}_{L^\infty}$\\
\hline
 1 &  0.10937032 &  0.48300664 &  6.29978738 &    /  &    /  &    / \\
 2 &  0.02713496 &  0.13212665 &  1.94510739 &  2.01 &  1.87 &  1.70 \\
 3 &  0.00720221 &  0.03723408 &  0.61346273 &  1.91 &  1.83 &  1.66 \\
 4 &  0.00183081 &  0.00982563 &  0.17399005 &  1.98 &  1.92 &  1.82 \\
 5 &  0.00042588 &  0.00242796 &  0.04646078 &  2.10 &  2.02 &  1.90 \\
 6 &  0.00009796 &  0.00054833 &  0.01201333 &  2.12 &  2.15 &  1.95 \\
\hline
\end{tabular}
\end{center}
\caption{First example: Errors and EOC in the projected state.}
\label{tab:ex1yproj}
\end{table}
\begin{table}[hbt]
\begin{center}
\begin{tabular}{ccccccc}
\hline
$\ell$ & $\|\popt-p_{kh}\|_{L^1(I,L^1(\Omega))}$ & $\|\popt-p_{kh}\|_{L^2(L^2)}$ & $\|\popt-p_{kh}\|_{L^\infty(L^\infty)}$ &
 $\text{EOC}_{L^1}$& $\text{EOC}_{L^2}$& $\text{EOC}_{L^\infty}$\\
\hline
 1 &  0.00125773 &  0.00652756 &  0.08119466 &    /   &   /  &    / \\
 2 &  0.00029007 &  0.00166280 &  0.02721190 &  2.12  & 1.97 &  1.58\\
 3 &  0.00006933 &  0.00040888 &  0.00799559 &  2.06  & 2.02 &  1.77\\
 4 &  0.00001564 &  0.00009340 &  0.00207127 &  2.15  & 2.13 &  1.95\\
 5 &  0.00000310 &  0.00001739 &  0.00041008 &  2.34  & 2.43 &  2.34\\
 6 &  0.00000273 &  0.00001246 &  0.00017768 &  0.18  & 0.48 &  1.21\\
\hline
\end{tabular}
\end{center}
\caption{First example: Errors and EOC in the adjoint.}
\label{tab:ex1p}
\end{table}

Table \ref{tab:ex1u} shows the behavior of several errors in time between the exact control $\uopt$ and its discretized computed counterpart $u_{kh}$, obtained by the fixpoint iteration. Furthermore, the \emph{experimental order of convergence} (EOC) is given. The table indicates an error behavior of $\mathcal{O}(k^2)$ for the $L^2$ error in the control, which is in accordance with Theorem \ref{T:main}. Furthermore, the error of the adjoint, see table \ref{tab:ex1p}, shows the same behavior, as expected. Here we note that the EOC in our numerical example deteriorates if the temporal error reaches the size of the spatial error (which in the numerical investigations is fixed through the choice of $\text{Nh}$), see e.g. the last lines in Tables \ref{tab:ex1u}, \ref{tab:ex1p}, \ref{tab:ex2u}, \ref{tab:ex2p}.\\
Since the state is discretized piecewise constant in time, the order of convergence is only one. This is depicted in table \ref{tab:ex1y}. However, without further numerical effort we obtain a second order convergent approximation of the state with the projection $\pi_{P_k^*}y_k$ of the discrete state $y_k$, see Theorem \ref{T:2ndorderstate} and see table \ref{tab:ex1yproj} for the corresponding numerical results. In practise this means that we can gain a better approximation of the state without further effort; we only have to interpret the discrete state vector $\vec y_k$, i.e. the vector containing the value of $y_k$ on each interval $I_\j$, in the right way, namely as a vector of values on the gridpoints of the dual grid $t_1^* < \dots < t_\K^*$.

\begin{figure}
    \centering  
   \subfloat[$\ell=1$]{ \includegraphics[trim=25mm 75mm 15mm 91mm,clip,width=0.3\textwidth]{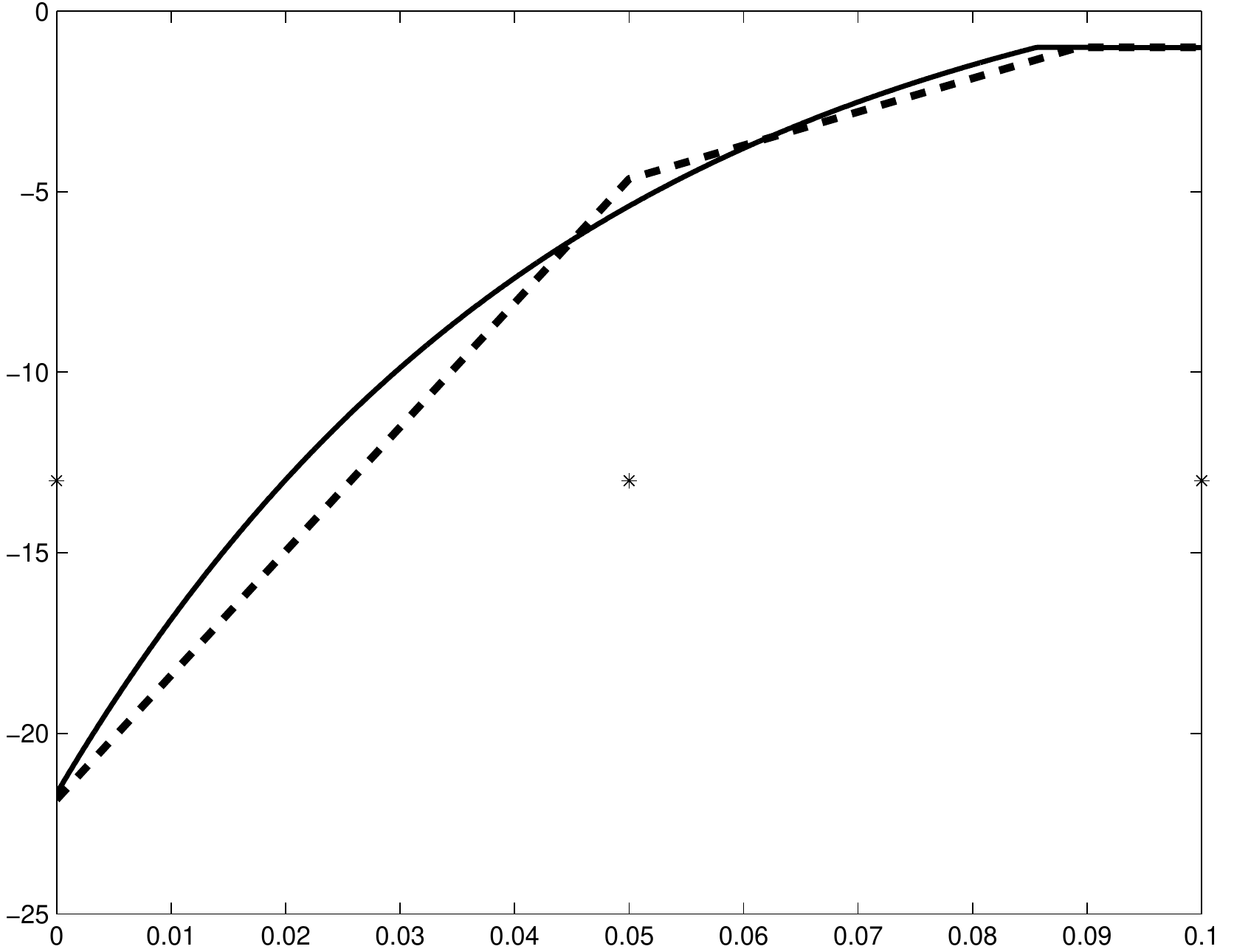}  }
   \subfloat[$\ell=2$]{ \includegraphics[trim=25mm 75mm 15mm 91mm,clip,width=0.3\textwidth]{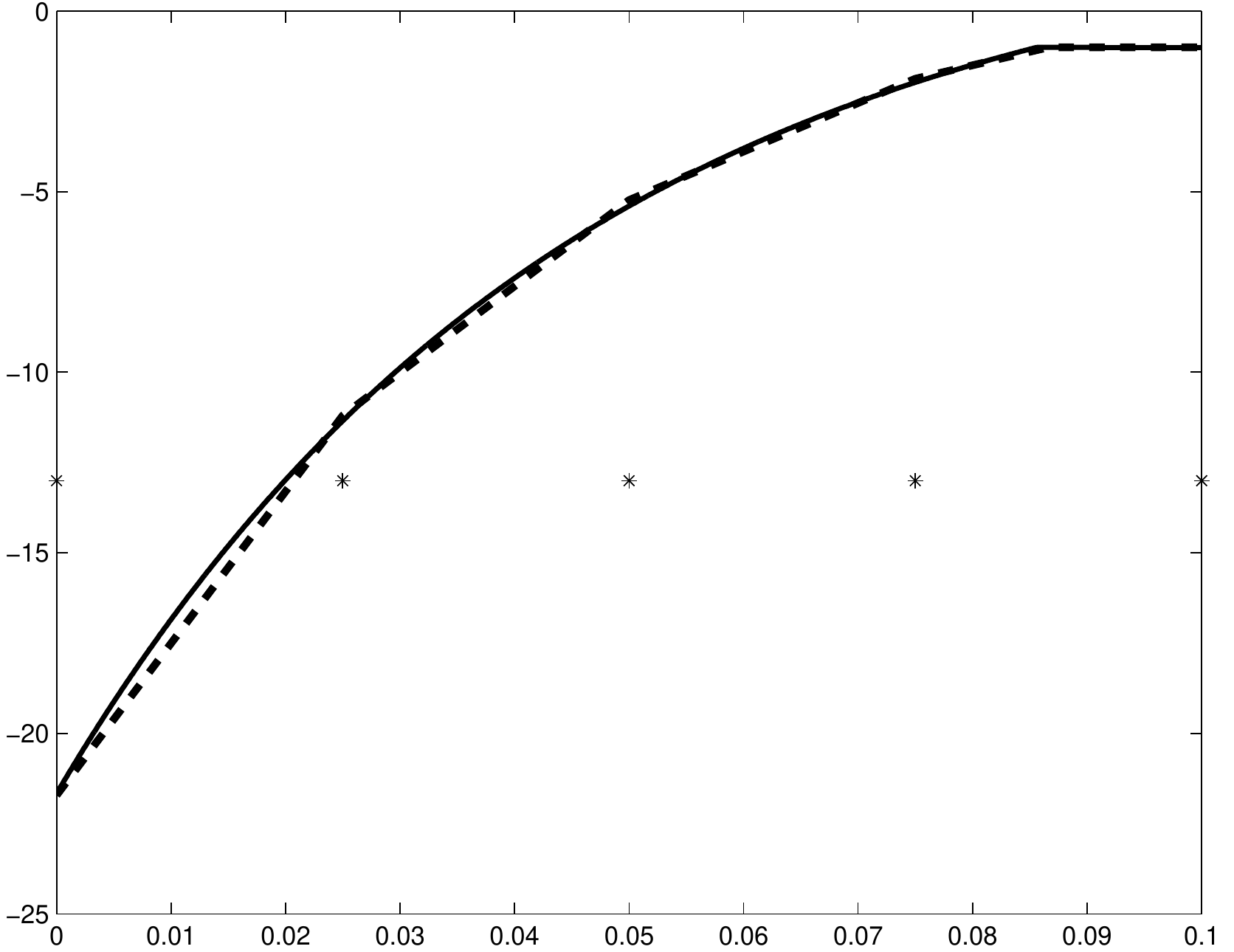}  }
   \subfloat[$\ell=3$]{ \includegraphics[trim=25mm 75mm 15mm 91mm,clip,width=0.3\textwidth]{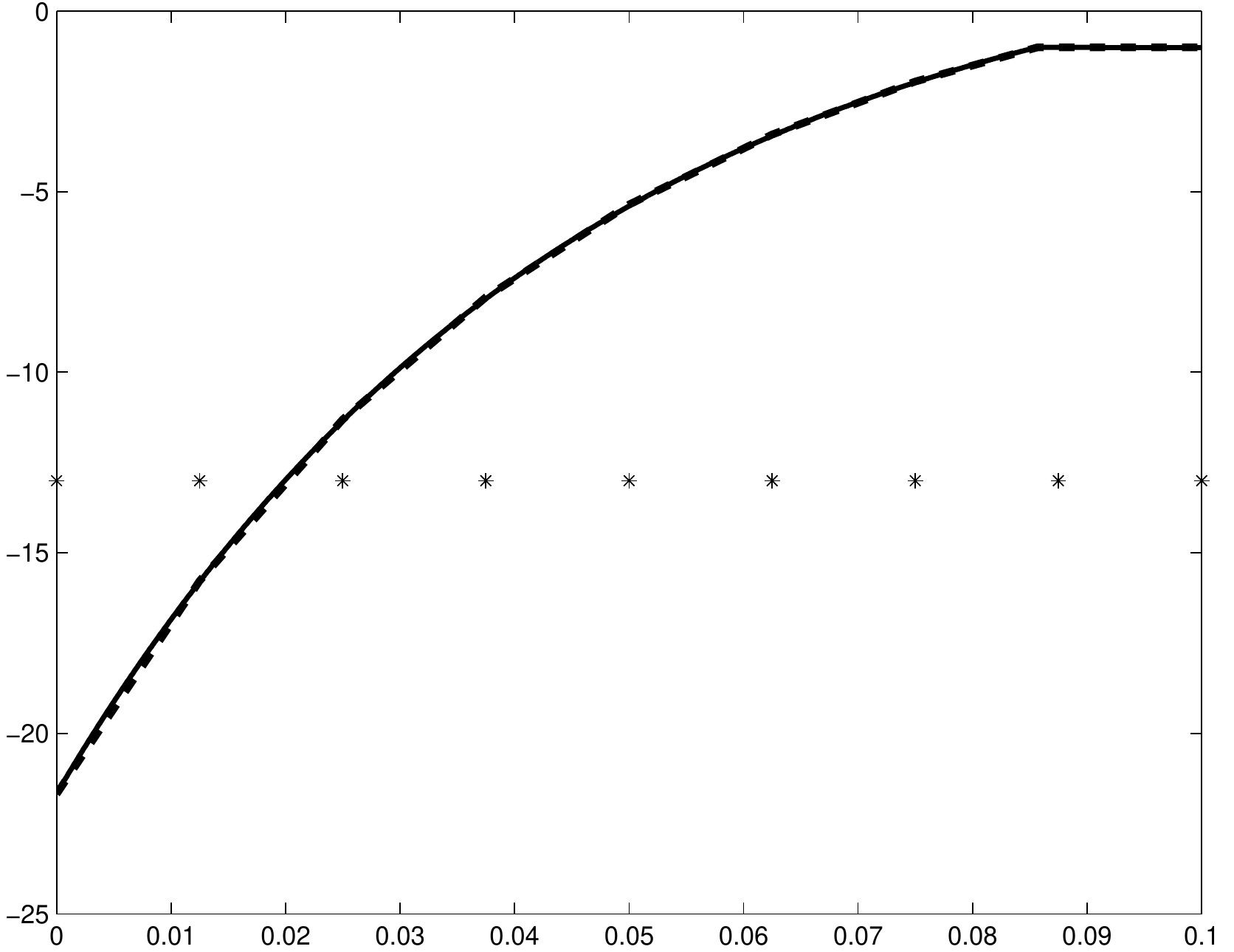}  }
    \caption{First example: Optimal control $\uopt$ (solid) and $u_{kh}$ (dashed) over time after refinement level $\ell$.}
    \label{fig:ex1}
\end{figure}

Figure \ref{fig:ex1} illustrates the convergence of $u_{kh}$ to $\uopt$. Note that the intersection points between inactive set $\mathcal I_{kh}=\set{t\in I}{a < u_{kh}(t) < b}$ and active set $\mathcal A_{kh}:=I\backslash \mathcal I_{kh}$ need not coincide with time grid points since we use variational discretization.
Let us further note that the number of fixpoint iterations does not depend on the fineness of the time grid size. In our example four iterations are needed to reach the above mentioned threshold of $t_0:=10^{-5}$. Let us mention that the fixpoint iteration only converges for large enough values of $\alpha$, see e.g. \cite{HinzeVierling2012} which seems to be the case in our numerical examples. For smaller values of $\alpha$ the semi-smooth Newton method can be applied, see also \cite{HinzeVierling2012} for its numerical analysis in the case of variational discretization of elliptic optimal control problems. 

\subsection{Second example}

This example is a slight variant of the first one yielding more intersection points between the active and inactive set.
With the space-time domain $\Omega\times I = (0,1)^2 \times (0,0.5)$, we set
\[
      \yopt(t,x_1,x_2) := w_a(t,x_1,x_2) := \cos\left(\frac tT\,2\pi a\right)\cdot g_1(x_1,x_2)\,,
\]
\[
      \popt(t,x_1,x_2) := w_a(t,x_1,x_2)-w_a(T,x_1,x_2)\,,
\]
\[
      y_0(x_1,x_2) := g_1(x_1,x_2)\,,
\]
where $g_1$ is defined in the first example.
Consequently,
\[
      g_0= g_1 2 \pi \left( -\frac aT \sin\left(\frac tT\,2\pi a\right) + \pi\cos\left(\frac tT\,2\pi a\right) \right)-B\uopt\,,
\]
\[
      y_d= g_1 \left( \cos\left(\frac tT\,2\pi a\right) \left(1-2\pi^2\right) - \frac{2\pi a}{T} \sin\left(\frac tT\,2\pi a\right) +2\pi^2 \cos\left(2\pi a\right) \right)\,,
\]
and
\[
      \uopt= P_\Uad\left( -\frac{1}{4\alpha} \cos\left(\frac tT\,2\pi a\right) + \frac{1}{4\alpha} \right)\,.
\]
Futhermore, we set $\alpha=1$, $a_1:=0.2$, $b_1:=0.4$ and $a:=2$. Note that this example also fulfills the Assumption \ref{A:Regularity}.

We now consider refinement levels $\ell=1,2,3,4,5,6,7,8$ and proceed as described in the first example. We obtain the same EOCs for control, state, and adjoint, see the tables \ref{tab:ex2u}, \ref{tab:ex2y}, \ref{tab:ex2yproj}, \ref{tab:ex2p}, and figure \ref{fig:ex2}.
\begin{table}[hbt]
\begin{center}
\begin{tabular}{ccccccc}
\hline
$\ell$ & $\|\uopt-u_{kh}\|_{L^1(I,L^1(\Omega))}$ & $\|\uopt-u_{kh}\|_{L^2(L^2)}$ & $\|\uopt-u_{kh}\|_{L^\infty(L^\infty)}$ &
 $\text{EOC}_{L^1}$& $\text{EOC}_{L^2}$ & $\text{EOC}_{L^\infty}$\\
\hline
 1 &  0.04925427 &  0.09237138 &  0.20000000 &    /   &   /    &  / \\
 2 &  0.00256632 &  0.01106114 &  0.07336869 &  4.26  & 3.06   &1.45\\
 3 &  0.00403215 &  0.01144324 &  0.04704583 &  -0.65 &  -0.05 &  0.64\\
 4 &  0.00069342 &  0.00204495 &  0.00893696 &  2.54  & 2.48   &2.40\\
 5 &  0.00016762 &  0.00050729 &  0.00249463 &  2.05  & 2.01   &1.84\\
 6 &  0.00003989 &  0.00011939 &  0.00064497 &  2.07  & 2.09   &1.95\\
 7 &  0.00000948 &  0.00003227 &  0.00020672 &  2.07  & 1.89   &1.64\\
 8 &  0.00000764 &  0.00002142 &  0.00009457 &  0.31  & 0.59   &1.13\\
\hline
\end{tabular}
\end{center}
\caption{Second example: Errors and EOC in the control.}
\label{tab:ex2u}
\end{table}
\begin{table}[hbt]
\begin{center}
\begin{tabular}{ccccccc}
\hline
$\ell$ & $\|\yopt-y_{kh}\|_{L^1(I,L^1(\Omega))}$ & $\|\yopt-y_{kh}\|_{L^2(L^2)}$ & $\|\yopt-y_{kh}\|_{L^\infty(L^\infty)}$ &
 $\text{EOC}_{L^1}$& $\text{EOC}_{L^2}$& $\text{EOC}_{L^\infty}$\\
\hline
 1 &  0.19657193 &  0.41315218 &  2.24553307 &    /  &    /  &    / \\
 2 &  0.13005269 &  0.25408123 &  1.25552256 &  0.60 &  0.70 &  0.84\\
 3 &  0.05650537 &  0.11224959 &  0.65977254 &  1.20 &  1.18 &  0.93 \\
 4 &  0.02611675 &  0.05637041 &  0.38210207 &  1.11 &  0.99 &  0.79 \\
 5 &  0.01277289 &  0.02827337 &  0.19029296 &  1.03 &  1.00 &  1.01 \\
 6 &  0.00635223 &  0.01418903 &  0.09710641 &  1.01 &  0.99 &  0.97 \\
 7 &  0.00317298 &  0.00718111 &  0.04892792 &  1.00 &  0.98 &  0.99 \\
 8 &  0.00158730 &  0.00375667 &  0.02456764 &  1.00 &  0.93 &  0.99 \\
\hline
\end{tabular}
\end{center}
\caption{Second example: Errors and EOC in the state.}
\label{tab:ex2y}
\end{table}

\begin{table}[hbt]
\begin{center}
\begin{tabular}{ccccccc}
\hline
$\ell$ & $\|\yopt-\pi_{P_k^*} y_{kh}\|_{L^1(I,L^1(\Omega))}$ & $\|\dots\|_{L^2(L^2)}$ & $\|\dots\|_{L^\infty(L^\infty)}$ &
 $\text{EOC}_{L^1}$& $\text{EOC}_{L^2}$& $\text{EOC}_{L^\infty}$\\
\hline
 1 &  0.19734452 &  0.42154165 &  2.65669891 &    /  &    /  &    / \\
 2 &  0.13173168 &  0.25800727 &  1.39668789 &  0.58 &  0.71 &  0.93\\
 3 &  0.03422500 &  0.07418402 &  0.40783930 &  1.94 &  1.80 &  1.78 \\
 4 &  0.01080693 &  0.02168391 &  0.15176831 &  1.66 &  1.77 &  1.43 \\
 5 &  0.00282859 &  0.00567595 &  0.04685968 &  1.93 &  1.93 &  1.70 \\
 6 &  0.00071212 &  0.00143268 &  0.01229008 &  1.99 &  1.99 &  1.93 \\
 7 &  0.00017551 &  0.00035509 &  0.00311453 &  2.02 &  2.01 &  1.98 \\
 8 &  0.00004104 &  0.00008530 &  0.00078765 &  2.10 &  2.06 &  1.98 \\
\hline
\end{tabular}
\end{center}
\caption{Second example: Errors and EOC in the projected state.}
\label{tab:ex2yproj}
\end{table}
\begin{table}[hbt]
\begin{center}
\begin{tabular}{ccccccc}
\hline
$\ell$ & $\|\popt-p_{kh}\|_{L^1(I,L^1(\Omega))}$ & $\|\popt-p_{kh}\|_{L^2(L^2)}$ & $\|\popt-p_{kh}\|_{L^\infty(L^\infty)}$ &
 $\text{EOC}_{L^1}$& $\text{EOC}_{L^2}$& $\text{EOC}_{L^\infty}$\\
\hline
 1 &  0.20659855 &  0.46853028 &  2.86360259 &    /  &    /  &    / \\
 2 &  0.03491931 &  0.08118048 &  0.56829981 &  2.56 &  2.53 &  2.33\\
 3 &  0.01994220 &  0.04100552 &  0.20495644 &  0.81 &  0.99 &  1.47\\
 4 &  0.00440890 &  0.00895349 &  0.05815307 &  2.18 &  2.20 &  1.82\\
 5 &  0.00105993 &  0.00215639 &  0.01668075 &  2.06 &  2.05 &  1.80\\
 6 &  0.00026116 &  0.00053258 &  0.00447036 &  2.02 &  2.02 &  1.90\\
 7 &  0.00006984 &  0.00014824 &  0.00116014 &  1.90 &  1.85 &  1.95\\
 8 &  0.00004199 &  0.00008530 &  0.00046798 &  0.73 &  0.80 &  1.31\\
\hline
\end{tabular}
\end{center}
\caption{Second example: Errors and EOC in the adjoint.}
\label{tab:ex2p}
\end{table}
\begin{figure}
    \centering  
   \subfloat[$\ell=1$]{ \includegraphics[trim=25mm 75mm 15mm 91mm,clip,width=0.3\textwidth]{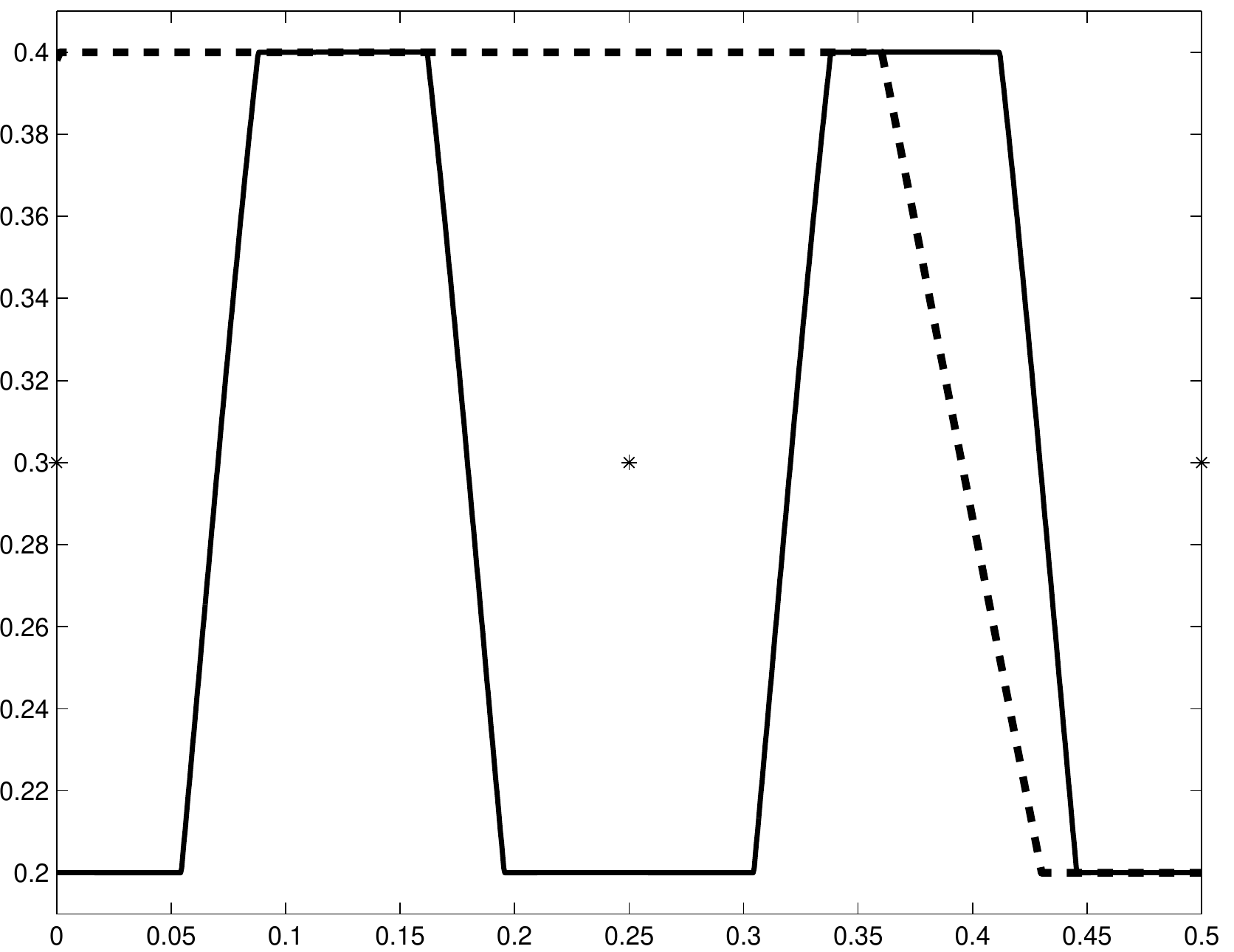}  }
   \subfloat[$\ell=2$]{ \includegraphics[trim=25mm 75mm 15mm 91mm,clip,width=0.3\textwidth]{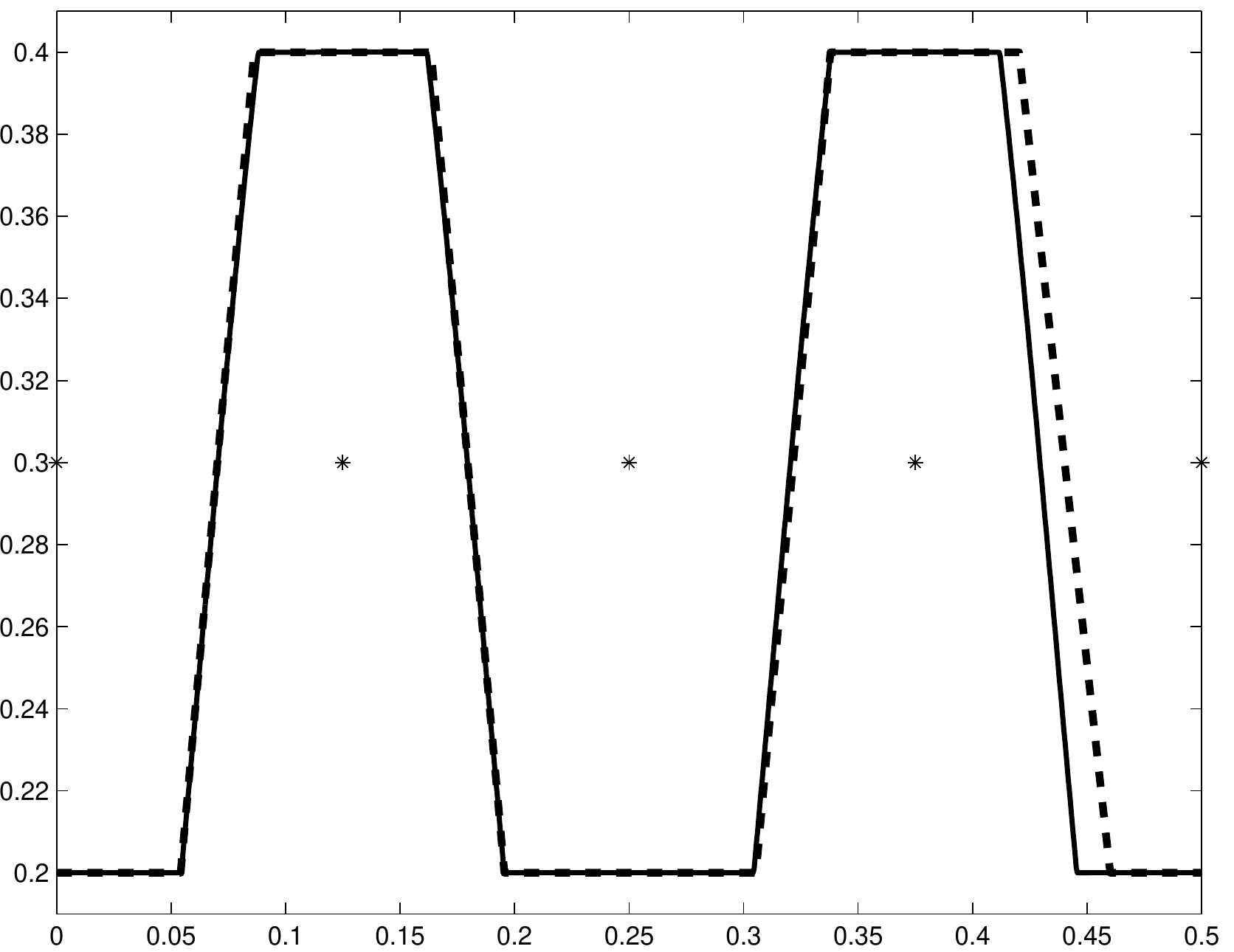}  }
   \subfloat[$\ell=3$]{ \includegraphics[trim=25mm 75mm 15mm 91mm,clip,width=0.3\textwidth]{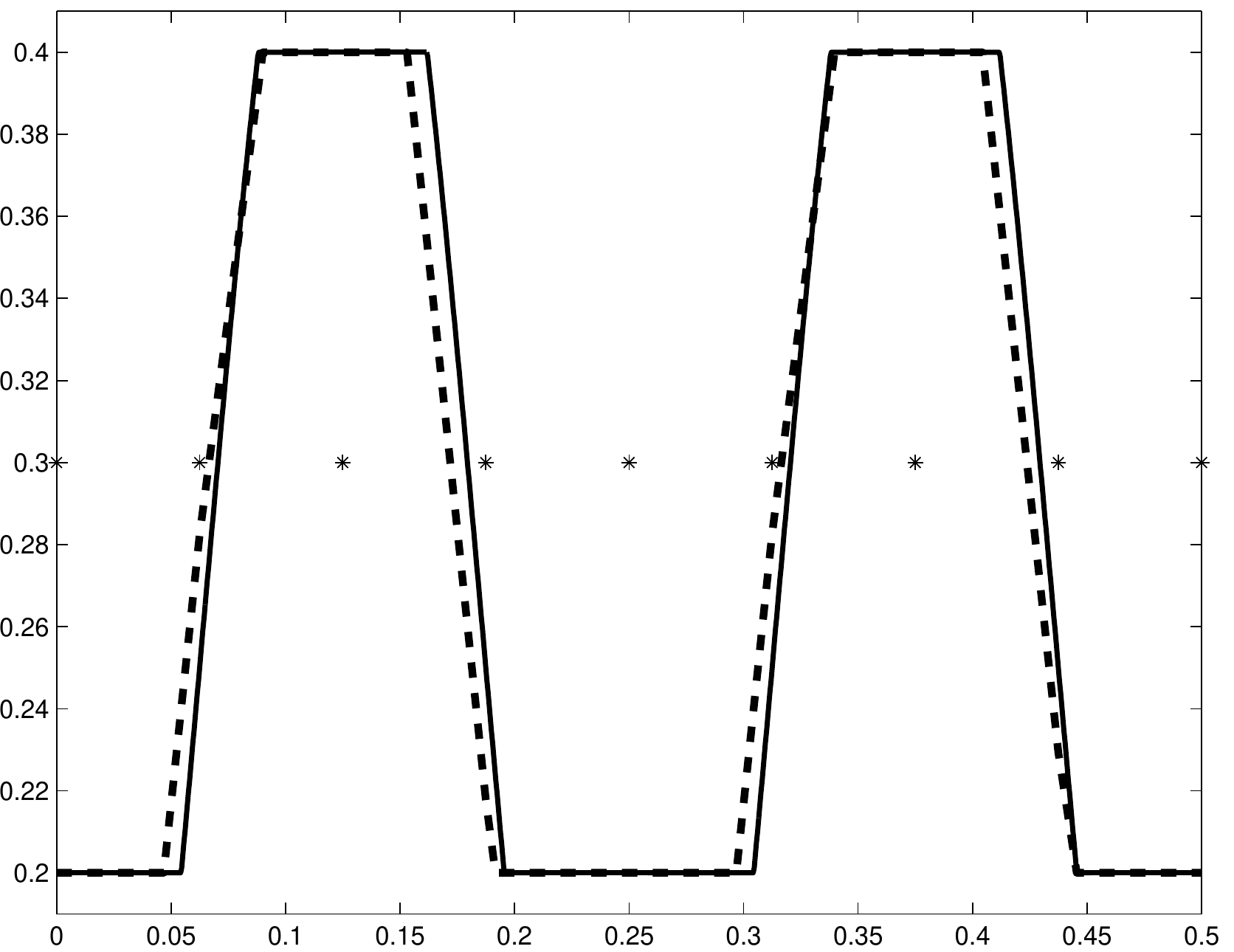}  }
    \caption{Second example: Optimal control $\uopt$ (solid) and $u_{kh}$ (dashed) over time after refinement level $\ell$.}
    \label{fig:ex2}
\end{figure}

\section*{Acknowledgements}

Christian Kahle provided some hints on the numerical implementation which are gratefully acknowledged.

\bibliographystyle{annotate}

\end{document}